\begin{document}

\title{On 2D integro-differential systems. Stability and sensitivity analysis.
}


\author{Monika Bartkiewicz         \and
        Marek Majewski  \and
        Stanis{\l}aw Walczak
}


\institute{M. Bartkiewicz \and M. Majewski (\Letter)\at
              Faculty of Mathematics and Computer Science \\
University of Lodz, Banacha 22, 90-238 Lodz, Poland\\
              \email{bartkiew@math.uni.lodz.pl}           
\and
M. Majewski\\
\email{marmaj@math.uni.lodz.pl}
\\
\\
           S. Walczak \at
              State School of
Higher Vocational Education\\
Batorego 64 C, 96-100 Skierniewice, Poland\\
Faculty of Mathematics and Computer Science \\
University of Lodz, Banacha 22, 90-238 Lodz, Poland\\
\email{stawal@math.uni.lodz.pl}
}

\date{Received: date / Accepted: date}

\maketitle

\begin{abstract}
In the paper a two-dimensional integro-differential system is considered.
Using some variational methods we give sufficient conditions for the
existence and uniqueness of a solution to the considered system. Moreover,
we show that the system is stable and robust.
\end{abstract}

\section{Introduction}

We will denote by $Q$ the unit interval in $\mathbb{R}^{2}$, i.e.
\begin{equation}
Q=\left\{ \left(x,y\right)\in\mathbb{R}^{2}:x\in\left[0,1\right]\mbox{ and }y\in\left[0,1\right]\right\} .\label{eq:1.1}
\end{equation}
General continuous 2D differential system has the following form
form
\begin{align}
z_{xy}\left(x,y\right) & =  f(x,y,z(x,y),z_{x}(x,y),z_{y}(x,y)),\label{1.2}\\
z(x,0) & =  a(x),\text{ }z(0,y)=b(y),\text{ }a(0)=b(0),\label{1.3}
\end{align}
where $f:Q\times\mathbb{R}^{n}\times\mathbb{R}^{n}\times\mathbb{R}^{n}\rightarrow\mathbb{R}^{n}$,
$n\geq1$, $a,b:[0,1]\rightarrow\mathbb{R}^{n}$ are given functions.
The linear case of system (\ref{1.2}) can be written as
\begin{align}
z_{xy}(x,y)  &=  A_{0}(x,y)z(x,y)+A_{1}(x,y)z_{x}(x,y)+\label{1.4}
  A_{2}(x,y)z_{y}(x,y)\\&+w(x,y),\notag
\end{align}
where $A_{0},A_{1},A_{2}$ are some matrix functions of the dimension
$n\times n$, $w$ is a given $n-$dimensional vector function and
$z$ satisfies boundary conditions (\ref{1.3}). Continuous 2D systems
correspond to the discrete model of Fornasini-Marchesini type, which
has the following form (see \cite{Fornasini_Marchesini_1976})
\begin{align}
z(i+1,j+1) & =  A_{0}(i,j)z(i,j)+A_{1}(i,j)z(i+1,j)+A_{2}(i,j)z(i,j)\label{1.5}\\
 & +  w(i,j),\nonumber \\
z(i,0) & =  a(i),z(0,j)=b(j),a(0)=b(0),\label{1.6}
\end{align}
$i,j=0,1,2,\ldots$.

Two-dimensional discrete systems (\ref{1.5})--(\ref{1.6})
and continuous systems (\ref{1.2})--(\ref{1.3}) play an essential
role in mathematical modeling of many technical, physical, biological
and other phenomena. For example, in the paper by Fornasini (see \cite{Fornasini_1991})
2D space models of the form (\ref{1.5})--(\ref{1.6}) were applied
to the investigation of the process of pollution and self purification of the river. Application
of the 2D discrete models to image processing and transmission were
studied in the book of Bracewell (see \cite{bracewell_two_1994}).
The 2D continuous systems of the form (\ref{1.2})--(\ref{1.3}) were
adopted to investigation of the gas filtration model (see \cite{Bors_2012}).
Other applications of discrete and continuous 2D systems in the
theory of automatic control, stability, robotics and optimization can be found
in papers of Gałkowski, Lam, Xu and Lin \cite{galkowski_lmi_2003}; Paszke, Lam, Gałkowski, Xu and Lin \cite{paszke_robust_2004};
Lomadze, Rogers and Wood \cite{Lomadze_2008}; Kaczorek \cite{Kaczorek_2001};
Dey and Kar \cite{Dey_2011}; Singh \cite{singh_stability_2008};
Idczak and Walczak \cite{Idczak_Walczak_2000} and in the monograph of Kaczorek
\cite{Kaczorek_1985}.

In the paper, we investigate two-dimensional integro-differential system of the
form
\begin{align}
&z_{xy}(x,y)+f^{1}(x,y,z(x,y))\label{1.7}\\
&+\int_{0}^{x}\int _{0}^{y}\left(f^{2}(s,t,z(s,t))+A^{1}(s,t)z_{x}(s,t)+A^{2}(s,t)z_{y}(s,t)\right)dsdt=v(x,y)\notag
\end{align}
with the following boundary conditions
\begin{equation}
z(x,0)=0\text{ for }x\in[0,1]\text{ and } z(0,y)=0\text{ for }y\in[0,1], \label{1.8}
\end{equation}
where $f^{1},f^{2}:Q\times\mathbb{R}^{n}\rightarrow\mathbb{R}$,
$A^{1},A^{2}:Q\rightarrow\mathbb{R}^{n}\times\mathbb{R}^{n}$ are
given functions (for more details see section \ref{sec:3}). We shall
consider the above system in the space of absolutely continuous functions
of two variables. The definition and basic properties of absolutely
continuous functions defined on the interval $Q$ are presented in
section \ref{sec:2}.

In the paper we prove, under assumptions \textbf{(C1)}--\textbf{(C3)} (see section
\ref{sec:3}), that for any
square integrable function $v$ system (\ref{1.7})--(\ref{1.8})  possesses a unique solution $z_{v}$
which continuously depends on $v$ and the operator $v\mapsto z_{v}$
is differentiable in the Fréchet sense, i.e. the considered system is
well-posed and robust.

The proof of the main result is based on the global diffeomorphism
theorem (Theorem \ref{thm:2.1}). In the final part of the paper we
give an example and compare the method used in the paper with the methods
based on the contraction principle and the Schauder fixed point theorem.

\section{Preliminaries\label{sec:2}}

We begin with  the following theorem on a diffeomorphism
between Banach and Hilbert spaces
\begin{theorem}
\label{thm:2.1}Let $Z$ be a real Banach space, V be a real Hilbert
space, $F:Z\rightarrow V$ be an operator of $C^{1}$ class. If
\begin{enumerate}
\item [(a)] for any $v\in V$ the functional $\varphi(z)=\frac{1}{2}\|F(z)-v\|_{V}^{2}$
satisfies Palais-Smale condition ((PS)--condition),
\item [(b)] for any $v\in V$ the equation $F'(z)h=v$ possesses a unique
solution,
\end{enumerate}
then
\begin{enumerate}
\item [(1)] for any $v\in V$ there exists exactly one solution
$z_{v}\in Z$ to the system $F(z)=v$,
\item [(2)] the operator $V\ni v\rightarrow z_{v}\in Z$
is differentiable in the Fréchet sense.
\end{enumerate}
In other words, the operator $F$ is a diffeomorphism between Banach
space $Z$ and Hilbert space $V$.
\end{theorem}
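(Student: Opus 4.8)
The plan is to run a global-inversion argument organized around the functional $\varphi$ in hypothesis (a), using (b) twice: once to upgrade $F$ to a local diffeomorphism, and once to identify critical points of $\varphi$ with solutions of $F(z)=v$. I would first record the local picture. Fix $z\in Z$; by (b) the bounded linear map $F'(z)\colon Z\to V$ is a bijection, so by the Banach bounded-inverse theorem $F'(z)^{-1}$ is bounded, and since $F\in C^{1}$ the Banach-space inverse function theorem gives that $F$ is a local $C^{1}$-diffeomorphism at every point of $Z$. This already yields conclusion (2) once (1) is established: if $z_{v}$ is the unique solution of $F(z)=v$, then $F$ restricts to a $C^{1}$-diffeomorphism of a neighbourhood $U$ of $z_{v}$ onto a neighbourhood $W$ of $v$, and by uniqueness the map $v'\mapsto z_{v'}$ agrees with $(F|_{U})^{-1}$ on $W$; hence it is Fréchet differentiable, with derivative $\bigl(F'(z_{v})\bigr)^{-1}$.

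For existence in (1), fix $v\in V$ and set $\varphi(z)=\frac12\|F(z)-v\|_{V}^{2}$. Then $\varphi\in C^{1}$ with $\varphi'(z)h=\langle F(z)-v,\,F'(z)h\rangle_{V}$, and $\varphi\ge 0$. Applying Ekeland's variational principle to a minimizing sequence produces $(z_{n})$ with $\varphi(z_{n})\to\inf\varphi$ and $\varphi'(z_{n})\to 0$; the (PS)-condition from (a) gives a subsequence converging to some $z_{v}$, and continuity yields $\varphi(z_{v})=\inf\varphi$ and $\varphi'(z_{v})=0$. Thus $\langle F(z_{v})-v,\,F'(z_{v})h\rangle_{V}=0$ for all $h\in Z$; since $F'(z_{v})$ is onto $V$ by (b), this forces $F(z_{v})=v$, and in particular $\inf\varphi=0$.

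Uniqueness in (1) is the step I expect to be the real obstacle. Suppose $F(z_{1})=F(z_{2})=v$ with $z_{1}\neq z_{2}$. Each $z_{i}$ is a global minimizer of $\varphi$ at level $0$; moreover, since $F$ is a local homeomorphism at $z_{i}$, there is a neighbourhood on which $F(z)\neq v$ for $z\neq z_{i}$, so $\varphi(z)>0=\varphi(z_{i})$ there, i.e. $z_{i}$ is a \emph{strict} local minimum. A $C^{1}$ functional satisfying the (PS)-condition that has two distinct strict local minima possesses, by the mountain pass theorem, a further critical point $z_{3}$ with $\varphi(z_{3})>0$. But any critical point $z$ of $\varphi$ satisfies $\langle F(z)-v,F'(z)h\rangle_{V}=0$ for all $h$, hence $F(z)=v$ and $\varphi(z)=0$ — contradicting $\varphi(z_{3})>0$. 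Therefore $F$ is injective, which together with the existence above proves (1).

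The points that will require genuine care are: checking that the minimizers are strict local minima so that the mountain-pass geometry is actually available (this is where the local-diffeomorphism conclusion is essential), and invoking the correct form of the ``two strict minima force a third critical point'' statement under the (PS)-condition alone. Everything else — the local diffeomorphism step, the minimization producing a solution, and the transfer of Fréchet differentiability from the local inverse — is routine once the local invertibility from (b) is in hand.
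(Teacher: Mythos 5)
Your overall strategy is sensible, but first note that the paper itself does not actually prove Theorem \ref{thm:2.1}: it only observes that, by the bounded inverse theorem, hypothesis (b) yields an estimate $\|F'(z)h\|_{V}\geq\alpha_{z}\|h\|_{Z}$, and then declares the theorem equivalent to \cite[Theorem 3.1]{dariusz_idczak_diffeomorphisms_2012}, where the mountain-pass-based global inversion argument is carried out. Within your self-contained attempt, the local-diffeomorphism step, the existence argument via Ekeland's principle combined with (PS) and the surjectivity of $F'(z_{v})$, and the deduction of Fr\'echet differentiability of $v\mapsto z_{v}$ from the local inverse are all correct (with the minor caveat that the paper's version of (PS) only supplies a critical point in the closure of the sequence, which is all you need, since every critical point of $\varphi$ solves $F(z)=v$).

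The uniqueness step, however, contains a genuine gap, and it sits exactly where you flagged it. The statement you invoke --- that a $C^{1}$ functional satisfying (PS) with two distinct strict local minima has a further critical point $z_{3}$ with $\varphi(z_{3})>0$ --- is not a theorem. The classical mountain pass theorem needs a strictly positive altitude, i.e.\ $\inf_{\|z-z_{1}\|=r}\varphi>\varphi(z_{1})$, and in an infinite-dimensional space a strict local minimum does not provide this: the sphere is not compact, so the infimum of $\varphi$ over it may still be $0$ even though every individual value is positive. The zero-altitude refinements (Pucci--Serrin, Ghoussoub--Preiss) do produce a third critical point, but possibly at the level $c=0$; since in your setting every critical point of $\varphi$ lies at level $0$, a third critical point at level $0$ is merely a third solution and yields no contradiction. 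To close the gap you must use the local invertibility more strongly than through ``strict local minimum''. One standard way (Katriel's argument, essentially what the cited reference relies on): choose $r<\|z_{1}-z_{2}\|$ so that $F$ maps $B(z_{1},r)$ homeomorphically onto an open set containing a ball $B_{V}(v,\delta)$; if a path $\gamma$ from $z_{1}$ to $z_{2}$ had $\max_{t}\varphi(\gamma(t))<\delta^{2}/2$, then $F\circ\gamma$ would stay in $B_{V}(v,\delta)$, and the lift $\bigl(F|_{B(z_{1},r)}\bigr)^{-1}\circ F\circ\gamma$ would coincide with $\gamma$ by a clopen/connectedness argument, forcing $z_{2}\in B(z_{1},r)$, a contradiction; hence the mountain-pass level is at least $\delta^{2}/2>0$, and the minimax principle under (PS) produces a critical point at a positive level, which is the contradiction you want. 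Alternatively, the annulus form of the Pucci--Serrin theorem yields critical points of $\varphi$ at level $0$ in arbitrarily thin annuli around $z_{1}$, contradicting the injectivity of $F$ near $z_{1}$. Either way, the positive lower bound on the mountain-pass level (or the localization of the extra critical points) has to be extracted from the local homeomorphism itself, and this is the one substantive piece of the proof your proposal leaves unsupplied.
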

We recall that functional $\phi$ satisfies {(PS)--condition}
if whenever there is a sequence $\left\{ z{}_{n}\right\} \subset Z$ with
$\left|\phi(z_{n})\right|\leq const$ and $\phi'(z_{n})\rightarrow0$
in $Z^{*}$, then in the closure of the set $\left\{ z_{n}:n\in N\right\} $,
there is some point $\bar{z}$ where $\phi'(\bar{z})=0$ (see \cite{aubin_applied_2006}).

From the bounded inverse theorem it follows that for any $z\in Z$
there exists a constant $\alpha_{z}>0$ such that $\|F'(z)h\|_{V}\geq\alpha_{z}\|h\|_{z}$.
Therefore it follows easily that the above theorem is equivalent to
\cite[Theorem 3.1]{dariusz_idczak_diffeomorphisms_2012}, with $f=F$.

Let us denote by $AC(Q,\mathbb{R}^{n})$ the space of absolutely continuous
vector functions $z=(z^{1},z^{2},\ldots,z^{n})$ defined on the interval
$Q$. The geometrical definition of the space $AC(Q,\mathbb{R})$ can be found in papers \cite{Berkson_1984} and \cite{Walczak_1987}.
In this paper we need necessary and sufficient conditions for
$z:Q\rightarrow\mathbb{R}^{n}$ to be absolutely continuous on $Q$ i.e.
$z\in AC(Q,\mathbb{R}^{n})$.
We have the following theorem (see \cite{Berkson_1984,Walczak_1987}).
\begin{theorem}\label{thm2.2}
A function $z$ belongs to the space $AC(Q,\mathbb{R}^{n})$ if and
only if there exist functions $l\in L^{1}(Q,\mathbb{R}^{n})$, $l^{1},l^{2}\in L^{1}([0,1],\mathbb{R}^{n})$
and a constant $c\in\mathbb{R}^{n}$ such that
\[
z(x,y)=\int _{0}^{x}\int _{0}^{y}l(s,t)dsdt+\int _{0}^{x}l^{1}(s)ds+\int _{0}^{y}l^{2}(t)dt+c.
\]
Moreover the function $z$ possesses partial derivatives $z_{x}$,
$z_{y}$, $z_{xy}$, for a.e. $(x,y)\in Q$ and
\[
z_{x}(x,y)=\int _{0}^{y}l(x,t)dt+l^{1}(x),
\]

\[
z_{y}(x,y)=\int _{0}^{x}l(s,y)ds+l^{2}(y),
\]
$$z_{xy}(x,y)=l(x,y).$$
\end{theorem}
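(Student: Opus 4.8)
The plan is to establish the equivalence in two directions and then to read off the formulas for $z_x$, $z_y$, $z_{xy}$ from the integral representation. Since everything can be done coordinate by coordinate, it suffices to treat the scalar case $n=1$, the vector statement following by applying the argument to each component $z^{i}$. I would work from the geometric definition of $AC(Q,\mathbb{R})$ given in \cite{Berkson_1984,Walczak_1987}, according to which $z$ is absolutely continuous on $Q$ precisely when the two one-variable sections $x\mapsto z(x,0)$ and $y\mapsto z(0,y)$ are absolutely continuous on $[0,1]$ and the mixed-difference set function
\[
R=[a,b]\times[c,d]\ \longmapsto\ \Delta_R z:=z(b,d)-z(a,d)-z(b,c)+z(a,c)
\]
is absolutely continuous with respect to planar Lebesgue measure, in the usual $\varepsilon$--$\delta$ sense over finite families of non-overlapping subrectangles of $Q$.

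For the forward implication (membership in $AC(Q,\mathbb{R})$ implies the representation), the one-dimensional fundamental theorem of calculus applied to the two sections produces $l^{1},l^{2}\in L^{1}([0,1],\mathbb{R})$ with $z(x,0)=z(0,0)+\int_{0}^{x}l^{1}(s)\,ds$ and $z(0,y)=z(0,0)+\int_{0}^{y}l^{2}(t)\,dt$. For the mixed term I would use that $R\mapsto\Delta_R z$ is finitely additive under subdivision of rectangles (a telescoping identity) and, by hypothesis, absolutely continuous; this lets one extend it to a countably additive signed Borel measure $\mu$ on $Q$ that is absolutely continuous with respect to Lebesgue measure, so the Radon--Nikodym theorem yields $l\in L^{1}(Q,\mathbb{R})$ with $\Delta_R z=\iint_R l$ for every rectangle $R$. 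Taking $R=[0,x]\times[0,y]$ and rearranging, with $c:=z(0,0)$, gives exactly
\[
z(x,y)=\int_{0}^{x}\!\int_{0}^{y}l(s,t)\,ds\,dt+\int_{0}^{x}l^{1}(s)\,ds+\int_{0}^{y}l^{2}(t)\,dt+c.
\]

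The converse is immediate: for $z$ of this form the sections $x\mapsto\int_{0}^{x}l^{1}+c$ and $y\mapsto\int_{0}^{y}l^{2}+c$ are absolutely continuous on $[0,1]$, while a direct computation gives $\Delta_R z=\iint_R l$ for every rectangle $R$, so the mixed-difference set function is the restriction of the absolutely continuous measure $E\mapsto\int_{E}l$ and hence is itself absolutely continuous; thus $z\in AC(Q,\mathbb{R})$. For the ``moreover'' part I would differentiate the representation: by Fubini the section $t\mapsto l(x,t)$ lies in $L^{1}([0,1])$ for a.e.\ $x$, and then the one-dimensional Lebesgue differentiation theorem applied in the first variable to $x\mapsto\int_{0}^{x}\bigl(\int_{0}^{y}l(s,t)\,dt\bigr)ds$ and to $x\mapsto\int_{0}^{x}l^{1}$ gives $z_x(x,y)=\int_{0}^{y}l(x,t)\,dt+l^{1}(x)$; symmetrically $z_y(x,y)=\int_{0}^{x}l(s,y)\,ds+l^{2}(y)$, and differentiating $z_x$ once more in $y$ gives $z_{xy}(x,y)=l(x,y)$, all for a.e.\ $(x,y)\in Q$.

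I expect the main obstacle to be the measure-extension step in the forward direction: one must verify that the mixed-difference interval function, being additive and absolutely continuous, genuinely extends to a countably additive signed measure on the Borel $\sigma$-algebra of $Q$, so that Radon--Nikodym is applicable. This is the planar analogue of the Lebesgue--Stieltjes construction and calls for a Hahn--Jordan decomposition together with a Carath\'eodory-type extension argument; the only other point needing real care is the routine Fubini bookkeeping that upgrades the ``for a.e.\ $x$, for all $y$'' differentiation statements to genuine a.e.-$(x,y)$ statements.
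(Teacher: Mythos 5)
Your outline is essentially correct, but you should be aware that the paper does not actually prove Theorem \ref{thm2.2}: it imports the result wholesale, stating only that it ``follows directly from \cite[Theorem 4]{Berkson_1984} and \cite[Proposition 3.5]{sremr_absolutely_2010}'' (see also \cite{Walczak_1987}). What you have written is in effect a reconstruction of the standard argument underlying those references: start from the Carath\'eodory-type definition (absolutely continuous marginal sections together with an absolutely continuous mixed-difference interval function $R\mapsto\Delta_R z$), apply the one-dimensional fundamental theorem of calculus to the sections to get $l^{1},l^{2}$, extend the additive absolutely continuous interval function to a signed Borel measure $\mu\ll\lambda_{2}$ and invoke Radon--Nikodym to produce $l$, and then assemble the representation from the identity $z(x,y)=\Delta_{[0,x]\times[0,y]}z+z(x,0)+z(0,y)-z(0,0)$ with $c=z(0,0)$; the converse and the formulas for $z_{x}$, $z_{y}$, $z_{xy}$ follow as you describe. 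The two points you flag are exactly where the real work sits: (i) passing from an $\varepsilon$--$\delta$ absolutely continuous, finitely additive rectangle function to a countably additive measure requires first establishing bounded variation (refine any finite family of non-overlapping rectangles by a fixed grid of cells whose areas lie below the $\delta$ associated with $\varepsilon=1$) and then countable additivity on the ring generated by subrectangles, before the Carath\'eodory extension and Hahn--Jordan decomposition apply; (ii) the a.e.-$(x,y)$ differentiation claims need the exceptional sets to be product-measurable so that Fubini upgrades the sectionwise statements. Neither step fails, and both are precisely what \cite[Proposition 3.5]{sremr_absolutely_2010} packages; your argument is therefore a legitimate self-contained alternative to the paper's citation, at the price of redoing the planar Lebesgue--Stieltjes construction that the paper simply quotes.
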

Theorem \ref{thm2.2} follows directly from \cite[Theorem 4]{Berkson_1984}
and \cite[Proposition 3.5]{sremr_absolutely_2010} (see also \cite[Theorem 2]{Walczak_1987}, \cite[Theorem 1]{walczak_differentiability_1998}).

It is easy to check that if the function $z$ satisfies homogeneous
boundary conditions, i.e. $z(x,0)=0$ for $x\in[0,1]$ and $z(0,y)=0$
for $y\in[0,1]$ then $l^{1}=0$, $l^{2}=0$, $c=0$ and consequently
we can write
\begin{equation}
z(x,y)=\int _{0}^{x}\int _{0}^{y}l(s,t)dsdt=\int _{0}^{x}\int _{0}^{y}z_{xy}(s,t)dsdt.\label{2.1zdublowany}
\end{equation}

By $AC_{0}^{2}(Q,\mathbb{R}^{n})$ we shall denote the space of absolutely
continuous functions on the interval $Q$ which satisfy homogeneous
boundary conditions $z(x,0)=z(0,,y)=0$ for $x,y\in[0,1]$ and such
that $z_{xy}\in L^{2}(Q,\mathbb{R}^{n})$. The space $AC_{0}^{2}$
is a Hilbert space with the inner product given by formula
\begin{equation}
\langle z^{1},z^{2}\rangle=\int _{0}^{1}\int _{0}^{1}\langle z_{xy}^{1}(x,y),z_{xy}^{2}(x,y)\rangle dxdy.\label{2.1}
\end{equation}
In the space $AC_{0}^{2}(Q,\mathbb{R}^{n})$ we introduce two norms.
The first one is a classical norm given by the formula
\begin{equation}
\|z\|=\left(\int _{0}^{1}\int _{0}^{1}|z_{xy}(x,y)|^{2}dxdy\right)^{\frac{1}{2}}=\|z_{xy}\|_{L^{2}}\label{2.2}
\end{equation}
and the second  is defined by the integral with exponential weight
\begin{equation}
\|z\|_{AC_{0}^{2},m}=\left(\int _{0}^{1}\int _{0}^{1}e^{-m(x+y)}|z_{xy}(x,y)|^{2}dxdy\right)^{\frac{1}{2}}\text{, }m>0.\label{2.3}
\end{equation}
Exponential norm (\ref{2.3}) was introduced by Bielecki in
\cite{Bielecki_1956}. The space $AC_{0}^{2}(Q,\mathbb{R}^{n})$ with
norm (\ref{2.3}) will be denoted by $AC_{0,m}^{2}(Q,\mathbb{R}^{n})$.

It is easy to notice that
\[
e^{-2m}\|z\|\leq\|z\|_{AC_{0}^{2},m}\leq\|z\|.
\]
Thus the norms given by formulas (\ref{2.2}) and (\ref{2.3}) are
equivalent.

Similarly, in the space of square integrable functions on $Q$ we introduce
two equivalent norms:
\[
\|v\|=\left(\int _{0}^{1}\int _{0}^{1}|v(x,y)|^{2}dxdy\right)^{\frac{1}{2}}
\]
and
\begin{equation}
\|v\|_{L_{m}^{2}}=\left(\int _{0}^{1}\int _{0}^{1}e^{-m(x+y)}|v(x,y)|^{2}dxdy\right)^{\frac{1}{2}}.\label{2.4}
\end{equation}
The space of square integrable functions with norm (\ref{2.4}) will be
denoted by $L_{m}^{2}(Q,\mathbb{R}^{n})$.

\section{Basic assumptions and lemmas\label{sec:3}}

On the functions defining system (\ref{1.7}) we assume
that
\begin{enumerate}
\item [\textbf{(C1)}] the functions $f^{1}(\cdot,\cdot,z)$, $f^{2}(\cdot,\cdot,z)$
are measurable on $Q$ for every $z\in\mathbb{R}^{n}$ and $f^{1}(x,y,\cdot)$,
$f^{2}(x,y,\cdot)$ are continuously differentiable on $\mathbb{R}^{n}$
for a.e. $(x,y)\in Q$, the function $A^{1}(\cdot,y)$ is differentiable
for a.e. $y\in[0,1]$, the function $A^{2}(x,\cdot)$ is differentiable
for a.e. $x\in[0,1]$, the functions $A^{1},A^{2},A_{x}^{1},A_{y}^{2}$
are measurable on $Q$ and essentially bounded on $Q$, the function
$v\in L^{2}(Q,\mathbb{R}^{n})$;
\item [\textbf{(C2)}] there exist a constant $B>0$ and a function $b\in L^{2}(Q,\mathbb{R}^{+})$
such that
\[
|f^{1}(x,y,z)|,|f^{2}(x,y,z)|\leq B|z|+b(x,y)
\]
and
\[
|A^{1}(x,y)|,|A^{2}(x,y)|,|A_{x}^{1}(x,y)|,|A_{y}^{2}(x,y)|\leq B
\]
 for $z\in\mathbb{R}^{n}$ and a.e. $(x,y)\in Q$;
\item [\textbf{(C3)}] the functions $f_{z}^{1}$, $f_{z}^{2}$ are bounded on
bounded sets, i.e. for any $\varrho>0$ there exists a constant $M_{\varrho}$,
such that
\[
|f_z^{1}(x,y,z)|,|f_z^{2}(x,y,z)|\leq M_{\varrho}
\]
 for $(x,y)\in Q$ and $|z|\leq\varrho$.
\end{enumerate}
In the following lemma we prove some estimates for functions from
the space $AC_{0}^{2}(Q,\mathbb{R}^{n})$.
\begin{lemma}
\label{lem:3.1}If the function $z\in AC_{0}^{2}(Q,\mathbb{R}^{n})$
then
\begin{equation}
\|z\|_{L_{m}^{2}}\leq\frac{2}{m}\|z\|_{AC_{0,m}^{2}},\label{3.1}
\end{equation}
\begin{equation}
\|w_{0}\|_{L_{m}^{2}}\leq\frac{2}{m}\|z\|_{AC_{0,m}^{2}},\label{3.2}
\end{equation}
\begin{equation}
\|w_{1}\|_{L_{m}^{2}}\leq\frac{2}{m}\|z\|_{AC_{0,m}^{2}},\label{3.3}
\end{equation}
\begin{equation}
\|w_{2}\|_{L_{m}^{2}}\leq\frac{2}{m}\|z\|_{AC_{0,m}^{2}},\label{3.4}
\end{equation}
where $w_{0}(x,y)=\int _{0}^{x}\int _{0}^{y}|z(s,t)|dsdt$,
$w_{1}(x,y)=\int _{0}^{x}\int _{0}^{y}|z_{x}(s,t)|dsdt$,
$w_{2}(x,y)=\int _{0}^{x}\int _{0}^{y}|z_{y}(s,t)|dsdt$.\end{lemma}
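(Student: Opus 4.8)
The plan is to reduce all four inequalities to a single auxiliary estimate: for every $h\in L^{2}(Q,\mathbb{R}^{n})$, writing $H(x,y)=\int_{0}^{x}\int_{0}^{y}|h(s,t)|\,ds\,dt$, one has $\|H\|_{L_{m}^{2}}\leq\frac{1}{m}\|h\|_{L_{m}^{2}}$. This is a Bielecki-type computation: factoring $|h(s,t)|=\bigl(e^{-\frac{m}{2}(s+t)}|h(s,t)|\bigr)e^{\frac{m}{2}(s+t)}$ and applying the Cauchy--Schwarz inequality on the rectangle $[0,x]\times[0,y]$ gives
\[
H(x,y)^{2}\leq\Bigl(\int_{0}^{x}\int_{0}^{y}e^{-m(s+t)}|h(s,t)|^{2}\,ds\,dt\Bigr)\Bigl(\int_{0}^{x}\int_{0}^{y}e^{m(s+t)}\,ds\,dt\Bigr).
\]
Since $\int_{0}^{x}\int_{0}^{y}e^{m(s+t)}\,ds\,dt=\frac{(e^{mx}-1)(e^{my}-1)}{m^{2}}\leq\frac{e^{m(x+y)}}{m^{2}}$ and the first factor is bounded by $\|h\|_{L_{m}^{2}}^{2}$, we obtain $e^{-m(x+y)}H(x,y)^{2}\leq\frac{1}{m^{2}}\|h\|_{L_{m}^{2}}^{2}$; integrating over $Q$ yields the claim.

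Next I would record, using representation (\ref{2.1zdublowany}) and Theorem \ref{thm2.2} with $l^{1}=l^{2}=0$ (which is legitimate because $z$ satisfies the homogeneous boundary conditions), the pointwise bounds $|z(x,y)|\leq\int_{0}^{x}\int_{0}^{y}|z_{xy}(s,t)|\,ds\,dt$, $|z_{x}(s,t)|\leq\int_{0}^{y}|z_{xy}(s,\tau)|\,d\tau$ for $t\le y$, and $|z_{y}(s,t)|\leq\int_{0}^{x}|z_{xy}(\sigma,t)|\,d\sigma$ for $s\le x$. From these, using that all inner integration regions are contained in $[0,x]\times[0,y]$ together with $x,y\le 1$, each of $|z(x,y)|$, $w_{0}(x,y)$, $w_{1}(x,y)$, $w_{2}(x,y)$ is dominated pointwise by $H(x,y)$ with $h=z_{xy}$; for instance $w_{1}(x,y)\le\int_{0}^{x}\int_{0}^{y}\bigl(\int_{0}^{y}|z_{xy}(s,\tau)|\,d\tau\bigr)ds\,dt=y\int_{0}^{x}\int_{0}^{y}|z_{xy}|\le\int_{0}^{x}\int_{0}^{y}|z_{xy}|$, and analogously for $w_{0}$ and $w_{2}$. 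All Fubini--Tonelli manipulations are justified since the integrands are nonnegative and $z_{xy}\in L^{2}(Q)\subset L^{1}(Q)$.

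Finally I would apply the auxiliary estimate with $h=z_{xy}$, observe that $\|z_{xy}\|_{L_{m}^{2}}=\|z\|_{AC_{0,m}^{2}}$ by the very definition (\ref{2.3}), and conclude that $\|z\|_{L_{m}^{2}},\ \|w_{0}\|_{L_{m}^{2}},\ \|w_{1}\|_{L_{m}^{2}},\ \|w_{2}\|_{L_{m}^{2}}\leq\frac{1}{m}\|z\|_{AC_{0,m}^{2}}\leq\frac{2}{m}\|z\|_{AC_{0,m}^{2}}$, which gives (\ref{3.1})--(\ref{3.4}) (with the slightly sharper constant $1/m$). There is no genuine obstacle here: the only points requiring care are keeping track of which rectangles the iterated integrals run over so that domination by a single $H$ is valid, and choosing the exponential split in the Cauchy--Schwarz step so that it produces exactly the factor $e^{m(x+y)}/m^{2}$ which cancels the weight in the $\|\cdot\|_{L_{m}^{2}}$ norm.
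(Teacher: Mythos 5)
Your proof is correct and in fact yields the sharper constant $\frac{1}{m}$ in place of $\frac{2}{m}$, but it reaches the estimates by a genuinely different route than the paper. The paper applies the plain Cauchy--Schwarz inequality to get $\bigl|\int_0^x\int_0^y z_{xy}\,ds\,dt\bigr|^2\le\int_0^x\int_0^y|z_{xy}|^2\,ds\,dt$ and then handles the resulting expression $\int_0^1\int_0^1 e^{-m(x+y)}\int_0^x\int_0^y|z_{xy}|^2\,ds\,dt\,dx\,dy$ by integrating by parts twice (once in $x$, once in $y$), which produces four nonnegative boundary and interior terms each bounded by $\frac{1}{m^2}\|z\|_{AC_{0,m}^2}^2$ and hence the factor $\frac{4}{m^2}$; that computation is then rerun (or invoked as ``similar'') for each of $w_0$, $w_1$, $w_2$ after the same pointwise reductions to $\int_0^x\int_0^y|z_{xy}|$. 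You instead use the classical Bielecki splitting $|h|=\bigl(e^{-\frac{m}{2}(s+t)}|h|\bigr)e^{\frac{m}{2}(s+t)}$ inside Cauchy--Schwarz, so the exponential weight is absorbed before any integration over $(x,y)$ takes place, and the elementary bound $\int_0^x\int_0^y e^{m(s+t)}\,ds\,dt\le e^{m(x+y)}/m^2$ replaces the integration by parts entirely; moreover you factor all four inequalities through the single auxiliary estimate $\|H\|_{L_m^2}\le\frac{1}{m}\|h\|_{L_m^2}$ applied with $h=z_{xy}$ once the pointwise dominations $|z|,w_0,w_1,w_2\le H$ are in hand. Those dominations are all justified (the inner integration regions sit inside $[0,x]\times[0,y]$ and $x,y\le1$), so the argument is complete. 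What your approach buys is brevity, a unified treatment of (\ref{3.1})--(\ref{3.4}), and a better constant; the paper's version is more pedestrian but avoids the weighted-splitting device. Both are valid.
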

\begin{remark}
The norms $\|\cdot\|_{L_{m}^{2}}$ and $\|\cdot\|_{AC_{0,m}^{2}}$
are defined by (\ref{2.3}) and (\ref{2.4}) respectively.\end{remark}
\begin{proof}
Let $z$ be an arbitrary function from the space $AC_{0}^{2}(Q,\mathbb{R}^{n})$.
By (\ref{2.1}), (\ref{2.3}) and (\ref{2.4}) we get
\begin{eqnarray*}
\|z\|_{L_{m}^{2}}^{2} & = & \int _{0}^{1}\int _{0}^{1}e^{-m(x+y)}\left|\int _{0}^{x}\int _{0}^{y}\;z_{xy}(s,t)dsdt\right|^{2}dxdy\\
 & \leq  &\int _{0}^{1}\int _{0}^{1}\left(e^{-m(x+y)}\int _{0}^{x}\int _{0}^{y}\;\left|z_{xy}(s,t)\right|^{2}dsdt\right)dxdy
\end{eqnarray*}
Integrating by parts we obtain successively
\begin{align}
&  \int_{0}^{1}\left(  \int_{0}^{1}\left(  e^{-m(x+y)}\int_{0}^{x}\int_{0}%
^{y}\left\vert z_{xy}(s,t)\right\vert ^{2}dsdt\right)  dx\right)
dy\label{3.5}\\
&  =\int_{0}^{1}\left(  \left[  \frac{-1}{m}e^{-m(x+y)}\int_{0}^{x}\int
_{0}^{y}\left\vert z_{xy}(s,t)\right\vert ^{2}dsdt\right]  _{x=0}%
^{x=1}\right.  \nonumber\\
&  -\left.  \int_{0}^{1}\left(  \frac{-1}{m}e^{-m(x+y)}\int_{0}^{y}\left\vert
z_{xy}(x,t)\right\vert ^{2}dt\right)  dx\right)  dy\nonumber\\
&  =\int_{0}^{1}\left(  \frac{-1}{m}e^{-m(1+y)}\int_{0}^{1}\int_{0}%
^{y}\left\vert z_{xy}(s,t)\right\vert ^{2}dsdt\right.  \nonumber\\
&  +\left.  \int_{0}^{1}\left(  \frac{1}{m}e^{-m(x+y)}\int_{0}^{y}\left\vert
z_{xy}(x,t)\right\vert ^{2}dt\right)  dx\right)  dy\nonumber\\
&  =\int_{0}^{1}\left(  \frac{-1}{m}e^{-m(1+y)}\int_{0}^{y}\int_{0}%
^{1}\left\vert z_{xy}(s,t)\right\vert ^{2}dsdt\right)  dy\nonumber\\
&  +\int_{0}^{1}\left(  \int_{0}^{1}\left(  \frac{1}{m}e^{-m(x+y)}\int_{0}%
^{y}\left\vert z_{xy}(x,t)\right\vert ^{2}dt\right)  dy\right)  dx\nonumber\\
&  =\left[  \frac{1}{m^{2}}e^{-m(1+y)}\int_{0}^{y}\int_{0}^{1}\left\vert
z_{xy}(s,t)\right\vert ^{2}dsdt\right]  _{y=0}^{y=1}\nonumber\\
&  +\int_{0}^{1}\left(  \frac{1}{m^{2}}e^{-m(1+y)}\int_{0}^{1}\left\vert
z_{xy}(s,y)\right\vert ^{2}ds\right)  dy\nonumber\\
&  +\int_{0}^{1}\left(  \left[  \frac{1}{m^{2}}e^{-m(x+y)}\int_{0}%
^{y}\left\vert z_{xy}(x,t)\right\vert ^{2}dt\right]  _{y=0}^{y=1}\right.
\nonumber\\
&  \left.  +\int_{0}^{1}\frac{1}{m^{2}}e^{-m(x+y)}\left\vert z_{xy}%
(x,y)\right\vert ^{2}dy\right)  dx\nonumber\\
&  \leq\frac{4}{m^{2}}\int_{0}^{1}\int_{0}^{1}e^{-m(s+t)}\left\vert
z_{xy}(s,t)\right\vert ^{2}dsdt\nonumber\\
&  =\frac{4}{m^{2}}\Vert z_{xy}\Vert_{L_{m}^{2}}^{2}=\frac{4}{m^{2}}\Vert
z\Vert_{AC_{0,m}^{2}}^{2}.\nonumber
\end{align}
Thus we proved inequality (\ref{3.1}). By the above and applying the Cauchy–Schwarz inequality we get
\begin{align*}
\Vert w_{0}\Vert_{L_{m}^{2}}^{2} &  =\int_{0}^{1}\int_{0}^{1}e^{-m(x+y)}%
\left(  \int_{0}^{x}\int_{0}^{y}|z(s,t)|dsdt\right)  ^{2}dxdy\\
&  \leq\int_{0}^{1}\int_{0}^{1}e^{-m(x+y)}\left(  \int_{0}^{x}\int_{0}%
^{y}\left(  \int_{0}^{s}\int_{0}^{t}|z_{xy}(\sigma,\tau)|d\sigma d\tau\right)
dsdt\right)  ^{2}dxdy\\
&  \leq\int_{0}^{1}\int_{0}^{1}e^{-m(x+y)}\left(  \int_{0}^{x}\int_{0}%
^{y}|z_{xy}(s,t)|dsdt\right)  ^{2}dxdy\\
&  \leq\int_{0}^{1}\int_{0}^{1}\left( e^{-m(x+y)} \int_{0}^{x}\int_{0}%
^{y}|z_{xy}(s,t)|^{2}dsdt\right)  dxdy\leq\frac{4}{m^{2}}\Vert z\Vert
_{AC_{0,m}^{2}}^{2}.
\end{align*}
Let us prove the next estimation. By (\ref{2.1zdublowany}) we have
\begin{align*}
\Vert w_{1}\Vert_{L_{m}^{2}}^{2} &  =\int_{0}^{1}\int_{0}^{1}e^{-m(x+y)}%
\left(  \int_{0}^{x}\int_{0}^{y}\;|z_{x}(s,t)|dsdt\right)  ^{2}dxdy\\
&  =\int_{0}^{1}\int_{0}^{1}e^{-m(x+y)}\left(  \int_{0}^{x}\int_{0}%
^{y}\left\vert \int_{0}^{t}\;z_{xy}(s,\tau)d\tau\right\vert dsdt\right)
^{2}dxdy\\
&  \leq\int_{0}^{1}\int_{0}^{1}e^{-m(x+y)}\left(  \int_{0}^{x}\int_{0}%
^{y}\left(  \int_{0}^{t}|z_{xy}(s,\tau)|d\tau\right)  dsdt\right)  ^{2}dxdy\\
&  \leq\int_{0}^{1}\int_{0}^{1}e^{-m(x+y)}\left(  \int_{0}^{x}\int_{0}%
^{y}\;|z_{xy}(s,t)|dsdt\right)  ^{2}dxdy.
\end{align*}
Integrating by parts  as in (\ref{3.5}), we get
\[
\|w_{1}\|_{L_{m}^{2}}^{2}\leq\frac{4}{m^{2}}\|z_{xy}\|^{2}=\frac{4}{m^{2}}\|z\|_{AC_{0,m}^{2}}^{2}.
\]
The proof of (\ref{3.4}) is similar.\qed
\end{proof}
Denote by $F:AC_{0}^{2}(Q,\mathbb{R}^{n})\rightarrow L^{2}(Q,\mathbb{R}^{n})$
the operator:
\begin{align}
F(z)(x,y) &  =z_{xy}(x,y)+f^{1}(x,y,z(x,y))\label{3.6}\\
&  +\int_{0}^{x}\int_{0}^{y}\left(  f^{2}(s,t,z(s,t))+A^{1}(s,t)z_{x}%
(s,t)+A^{2}(s,t)z_{y}(s,t)\right)  dsdt.\nonumber
\end{align}
We will prove that the norm of  $F$ is coercive.
\begin{lemma}
\label{lem:3.2}If the functions $f^{1}$, $f^{2}$, $A^{1}$, $A^{2}$
satisfy assumptions \textbf{(C1)} and  \textbf{(C2)} then the functional $z\mapsto\|F(z)\|_{L^{2}}$ is  coercive,
i.e.
\begin{equation}
\|F(z)\|_{L^{2}}\rightarrow\infty\text{ whenever }\|z\|_{AC_{0}^{2}}\rightarrow\infty.\label{3.7}
\end{equation}
\end{lemma}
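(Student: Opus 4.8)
\section*{Proof proposal for Lemma \ref{lem:3.2}}

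The plan is to bound $\|F(z)\|_{L^{2}}$ from below by its ``principal part'' $\|z_{xy}\|_{L^{2}}$ and to absorb the lower order terms by passing to the Bielecki weighted norms, in which Lemma \ref{lem:3.1} supplies the small factor $2/m$. Since, as recorded in Section \ref{sec:2}, the norms $\|\cdot\|_{L^{2}}$ and $\|\cdot\|_{L_{m}^{2}}$ are equivalent on $L^{2}(Q,\mathbb{R}^{n})$ and $\|\cdot\|_{AC_{0}^{2}}$ and $\|\cdot\|_{AC_{0,m}^{2}}$ are equivalent on $AC_{0}^{2}(Q,\mathbb{R}^{n})$, it is enough to prove, for one conveniently chosen $m>0$, that $\|F(z)\|_{L_{m}^{2}}\to\infty$ whenever $\|z\|_{AC_{0,m}^{2}}\to\infty$.

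First I would split $F(z)=z_{xy}+g_{1}(z)+g_{2}(z)$, where $g_{1}(z)(x,y)=f^{1}(x,y,z(x,y))$ and $g_{2}(z)(x,y)$ is the double integral in (\ref{3.6}), and apply the triangle inequality, using $\|z_{xy}\|_{L_{m}^{2}}=\|z\|_{AC_{0,m}^{2}}$, to get
$$\|F(z)\|_{L_{m}^{2}}\ge\|z\|_{AC_{0,m}^{2}}-\|g_{1}(z)\|_{L_{m}^{2}}-\|g_{2}(z)\|_{L_{m}^{2}}.$$
Next I would estimate the two subtracted terms with \textbf{(C2)} and Lemma \ref{lem:3.1}. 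For $g_{1}$, the growth bound $|g_{1}(z)(x,y)|\le B|z(x,y)|+b(x,y)$, Minkowski's inequality and (\ref{3.1}) give $\|g_{1}(z)\|_{L_{m}^{2}}\le B\|z\|_{L_{m}^{2}}+\|b\|_{L_{m}^{2}}\le\frac{2B}{m}\|z\|_{AC_{0,m}^{2}}+\|b\|_{L^{2}}$. For $g_{2}$, \textbf{(C2)} yields the pointwise bound
$$|g_{2}(z)(x,y)|\le B\,w_{0}(x,y)+B\,w_{1}(x,y)+B\,w_{2}(x,y)+\int_{0}^{x}\int_{0}^{y}b(s,t)\,ds\,dt,$$
with $w_{0},w_{1},w_{2}$ as in Lemma \ref{lem:3.1}; the last term is dominated pointwise by $\|b\|_{L^{1}(Q)}\le\|b\|_{L^{2}}$, so its $\|\cdot\|_{L_{m}^{2}}$ norm is at most $\|b\|_{L^{2}}$, while (\ref{3.2})--(\ref{3.4}) handle the first three summands, giving $\|g_{2}(z)\|_{L_{m}^{2}}\le\frac{6B}{m}\|z\|_{AC_{0,m}^{2}}+\|b\|_{L^{2}}$.

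Collecting these, $\|F(z)\|_{L_{m}^{2}}\ge\left(1-\frac{8B}{m}\right)\|z\|_{AC_{0,m}^{2}}-2\|b\|_{L^{2}}$. It then remains to fix the weight: since \textbf{(C2)} guarantees $B>0$, the choice $m=16B$ makes $1-\frac{8B}{m}=\frac12$, so $\|F(z)\|_{L_{m}^{2}}\ge\frac12\|z\|_{AC_{0,m}^{2}}-2\|b\|_{L^{2}}\to\infty$ as $\|z\|_{AC_{0,m}^{2}}\to\infty$, which by the norm equivalences is precisely (\ref{3.7}). The only delicate point is this choice of $m$: working with the unweighted norm, the constants produced by the iterated integrations are not small enough to dominate $B$, and it is exactly the factor $2/m$ of Lemma \ref{lem:3.1} together with the freedom to enlarge $m$ that closes the argument; beyond bookkeeping of the four terms there is no real analytic obstacle.
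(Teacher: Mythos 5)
Your proposal is correct and follows essentially the same route as the paper: pass to the Bielecki weighted norms, peel off $\|z_{xy}\|_{L_m^2}=\|z\|_{AC_{0,m}^2}$ by the triangle inequality, absorb the remaining terms via the growth bounds in \textbf{(C2)} and the $2/m$ estimates of Lemma \ref{lem:3.1} to arrive at $\|F(z)\|_{L_m^2}\ge(1-\tfrac{8B}{m})\|z\|_{AC_{0,m}^2}-\mathrm{const}$, and conclude by norm equivalence. The only cosmetic differences are that the paper simply takes any $m>8B$ and writes the additive constant as $D=2\|b\|_{L_m^2}$ rather than $2\|b\|_{L^2}$.
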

\begin{proof}
Let us take $m>8B$ (cf. \textbf{(C2)}).
By (\ref{3.6}) and assumptions \textbf{(C1)}--\textbf{(C2)} we have
\[
\|F(z)\|_{L^{2}_m}\geq\|z_{xy}\|_{L_{m}^{2}}-\left(B\|z\|_{L_{m}^{2}}+B\|w_{0}\|_{L_{m}^{2}}+B\|w_{1}\|_{L_{m}^{2}}+B\|w_{2}\|_{L_{m}^{2}}\right)-D,
\]
where $D=2\|b\|_{L_{m}^{2}}$. By Lemma \ref{lem:3.1} and thanks to (\ref{2.3})
it follows that
\[
\|F(z)\|_{L^{2}_m}\geq\|z\|_{AC_{0,m}^{2}}-\frac{8B}{m}\|z\|_{AC_{0,m}^{2}}-D=\|z\|_{AC_{0,m}^{2}}\left(1-\frac{8B}{m}\right)-D.
\]
Inequality $m>8B$ implies that $\|F(z)\|_{L_{m}^{2}}\rightarrow\infty$
if $\|z\|_{AC_{0,m}^{2}}\rightarrow\infty$. Since the pairs of the
norms $\|\cdot\|_{L^{2}}$, $\|\cdot\|_{L_{m}^{2}}$ and $\|\cdot\|_{AC_{0}^{2}}$,
$\|\cdot\|_{AC_{0,m}^{2}}$ are equivalent, we conclude that (\ref{3.7})
holds.\qed
\end{proof}
Let $\{z^{k}\}_{k=0}^{\infty}\subset AC_{0}^{2}$ be an arbitrary
sequence. Denote by $\{g^{k}\}\subset L^{2}(Q,\mathbb{R}^{n})$ a
sequence of functions defined by
\begin{align}
g^{k}&(x,y)   =f^{1}(x,y,z^{k}(x,y))\label{3.8}\\
& +\int_{0}^{x}\int_{0}^{y}\left(  f^{2}(s,t,z^{k}(s,t))+A^{1}(s,t)z_{x}%
^{k}(s,t)+A^{2}(s,t)z_{y}^{k}(s,t)\right)  dsdt-v(x,y),\nonumber
\end{align}
for $k=0,1,2,\ldots$.
\begin{lemma}\label{lem:3.4}
If
\begin{enumerate}
\item the functions $f^{1}$, $f^{2}$, $A^{1}$, $A^{2}$ satisfy assumptions
\textbf{(C1)} and \textbf{(C2)};
\item the sequence $\{z^{k}\}_{k=0}^{\infty}\subset AC_{0}^{2}(Q,\mathbb{R}^{n})$
tends to $z^{0}\in AC_{0}^{2}(Q,\mathbb{R}^{n})$ weakly in $AC_{0}^{2}(Q,\mathbb{R}^{n})$
\end{enumerate}
then
\begin{enumerate}
\item [(a)] the sequence of functions $\{z^{k}\}$ tends uniformly
to $z^{0}$ on the interval $Q$;
\item [(b)] the sequence $\{g^{k}\}$ tends to $g^{0}$
for $(x,y)\in Q$ a.e.
\end{enumerate}
Moreover, there exists a function $b^{0}\in L^{2}(Q,\mathbb{R}^{+})$
such that
\[
|g^{k}(x,y)|\leq b^{0}(x,y)
\]
for a.e. $(x,y)\in Q$ and $k=1,2,\ldots$.

\end{lemma}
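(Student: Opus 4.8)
The plan is to establish the three claims in the natural order --- uniform convergence first, then pointwise convergence of $\{g^k\}$, and finally the domination by an $L^2$ function --- since each step feeds the next. First I would prove (a). By Theorem~\ref{thm2.2} and the homogeneous boundary conditions, every $z\in AC_0^2(Q,\mathbb{R}^n)$ satisfies $z(x,y)=\int_0^x\int_0^y z_{xy}(s,t)\,ds\,dt$, so the point evaluation $z\mapsto z(x,y)$ is the inner product of $z_{xy}$ with the indicator $\mathbf{1}_{[0,x]\times[0,y]}\in L^2(Q)$; more generally, by the formulas for $z_x$ and $z_y$ in Theorem~\ref{thm2.2}, the maps $z\mapsto z_x(x,y)$ and $z\mapsto z_y(x,y)$ are also bounded linear functionals on $AC_0^2$. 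Hence weak convergence $z^k\rightharpoonup z^0$ gives pointwise convergence of $z^k$, $z^k_x$, $z^k_y$. To upgrade pointwise to uniform convergence of $z^k$ I would invoke the Arzel\`a--Ascoli theorem: a weakly convergent sequence is norm-bounded, say $\|z^k\|_{AC_0^2}\le R$, and then $|z^k(x,y)-z^k(x',y')|$ is controlled by $\|z^k_{xy}\|_{L^2}$ times the $L^2$-norm of the difference of the two indicator rectangles (Cauchy--Schwarz), which is small uniformly in $k$; equicontinuity plus pointwise convergence yields uniform convergence on the compact set $Q$.

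Next, for (b), fix a.e.\ $(x,y)\in Q$ and pass to the limit in the defining formula \eqref{3.8} for $g^k$. The term $f^1(x,y,z^k(x,y))\to f^1(x,y,z^0(x,y))$ by continuity of $f^1(x,y,\cdot)$ (assumption \textbf{(C1)}) together with $z^k(x,y)\to z^0(x,y)$ from part~(a). For the iterated-integral term I would justify passage to the limit under the integral sign by dominated convergence on $[0,x]\times[0,y]$: the integrand $f^2(s,t,z^k(s,t))+A^1(s,t)z^k_x(s,t)+A^2(s,t)z^k_y(s,t)$ converges a.e.\ in $(s,t)$ (continuity of $f^2(s,t,\cdot)$ plus the pointwise convergence of $z^k,z^k_x,z^k_y$ established above), and it is dominated by $B\sup_k\|z^k\|_\infty+b(s,t)+B\cdot\sup_k\|z^k_x\|\cdots$, which I will show below lies in $L^1(Q)$ (indeed in $L^2(Q)$). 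Since $v$ is fixed, $g^k(x,y)\to g^0(x,y)$ follows.

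For the uniform domination, set $\rho:=\sup_k\|z^k\|_\infty<\infty$, which is finite by the equicontinuity/boundedness from part~(a). By \textbf{(C2)}, $|f^1(x,y,z^k(x,y))|\le B\rho+b(x,y)$ and likewise for the $f^2$ contribution inside the integral, while $|A^i(s,t)z^{k}_{x}(s,t)|\le B|z^k_x(s,t)|$. Then
\[
|g^k(x,y)|\le B\rho+b(x,y)+\int_0^x\!\!\int_0^y\!\Big(B\rho+b(s,t)+B|z^k_x(s,t)|+B|z^k_y(s,t)|\Big)ds\,dt+|v(x,y)|,
\]
and I would bound the iterated-integral terms using the very estimates of Lemma~\ref{lem:3.1}: the quantities $w_0,w_1,w_2$ (with $z=z^k$, or with $z$ replaced by the constant giving $B\rho$) are all in $L^2(Q)$ with norm controlled by $\tfrac{2}{m}\|z^k\|_{AC_0^2}\le\tfrac{2R}{m}$, uniformly in $k$. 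Taking $b^0(x,y)$ to be the sum of the fixed functions $B\rho+|v(x,y)|+b(x,y)$, the constant $B\rho$ (times the area of $Q$), and a common $L^2$-majorant of $\{w_1^k\},\{w_2^k\},\{w_0^k\}$ --- for instance their pointwise supremum, which is again dominated in $L^2$ since each is bounded by the monotone function $\int_0^1\int_0^1|z^k_{xy}|$ --- gives the required $b^0\in L^2(Q,\mathbb{R}^+)$.

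I expect the main obstacle to be part~(a): upgrading weak convergence in $AC_0^2$ to uniform convergence. The pointwise convergence is immediate once one recognizes point evaluations as bounded functionals, but the equicontinuity estimate requires care in writing $z^k(x,y)-z^k(x',y')$ as an $L^2$-pairing against a difference of indicator functions and checking that the $L^2$-norm of that difference tends to $0$ as $(x',y')\to(x,y)$ uniformly over the (norm-bounded) sequence; the rest of the proof is then a routine combination of continuity, dominated convergence, and the a~priori bounds from Lemma~\ref{lem:3.1}.
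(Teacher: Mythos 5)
Your part (a) is essentially correct and is the same core estimate as the paper's (Cauchy--Schwarz of $z^k_{xy}$ against a difference of indicator functions of rectangles); you organize it as equicontinuity plus pointwise convergence, while the paper argues by contradiction along a convergent sequence of points $(x^n,y^n)$. Either route works.

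There is, however, a genuine gap in part (b). You assert that $z\mapsto z_x(x,y)$ and $z\mapsto z_y(x,y)$ are bounded linear functionals on $AC_0^2$, and from this you deduce pointwise a.e.\ convergence of $z^k_x$ and $z^k_y$, which you then feed into dominated convergence for the term $\int_0^x\int_0^y A^1 z^k_x+A^2 z^k_y$. This is false: by Theorem \ref{thm2.2}, $z_x(x,y)=\int_0^y z_{xy}(x,t)\,dt$ is an integral of $z_{xy}$ over a line segment, a null set in $Q$, so it is not a continuous functional of $z_{xy}\in L^2(Q)$. Concretely, take $z^k_{xy}(s,t)=\sin(ks)$; then $z^k\rightharpoonup 0$ in $AC_0^2$ (and indeed $z^k\to 0$ uniformly), but $z^k_x(s,t)=t\sin(ks)$ does not converge pointwise for a.e.\ $s$. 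So your dominated convergence step for the $A^1 z^k_x$ and $A^2 z^k_y$ terms collapses. The paper handles exactly this difficulty by integrating by parts,
\[
\int_0^x\!\!\int_0^y A^1(s,t)z^k_x(s,t)\,ds\,dt=\int_0^y A^1(x,t)z^k(x,t)\,dt-\int_0^x\!\!\int_0^y A^1_x(s,t)z^k(s,t)\,ds\,dt,
\]
which moves the derivative off $z^k$ onto $A^1$ and reduces everything to the uniform convergence of $z^k$ established in (a); this is precisely why \textbf{(C1)} demands differentiability of $A^1$ in $x$ and of $A^2$ in $y$, hypotheses your argument never invokes. (An alternative repair: apply Fubini to rewrite the double integral as an $L^2(Q)$-pairing of $z^k_{xy}$ with a bounded kernel and use weak convergence directly.) Your final domination step is essentially sound, since $w_0^k,w_1^k,w_2^k$ are bounded by a constant uniformly in $k$ once $\sup_k\|z^k\|_{AC_0^2}<\infty$, matching the paper's estimate (\ref{3.14}).
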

\begin{proof}
We first prove that the weak convergence of the sequence $\{z^{k}\}$
to $z^{0}$ in the space $AC_{0}^{2}(Q,\mathbb{R}^{n})$ implies the
uniform convergence of the sequence  $\{z^{k}\}$
to $z^{0}$ on the interval $Q$. By the definition of the inner
product (see (\ref{2.1})) the weak convergence of the sequence $\{z^{k}\}$
to $z^{0}$ in the space $AC_{0}^{2}(Q,\mathbb{R}^{n})$ is equivalent
to the weak convergence of mixed second order derivatives $\{z_{xy}^{k}\}$
to $z_{xy}^{0}$ in the space $L^{2}(Q,\mathbb{R}^{n})$. Without loss
of generality we can assume that $z^{0}=0$. Suppose that $z^{k}$
does not converge uniformly to $z^{0}=0$ while it converges
to $0$ weakly in $AC_{0}^{2}(Q,\mathbb{R}^{n})$. Therefore there
exists $\varepsilon_{0}>0$ such that for any $n\in\mathbb{N}$ there
is a point $(x^{n},y^{n})\in Q$ such that
\begin{equation}
|z^{n}(x^{n},y^{n})|>\varepsilon_{0}.\label{3.9}
\end{equation}
The sequence $\left\{\left(x^{n},y^{n}\right)\right\}\subset Q$ is compact. Passing if necessary
to a subsequence we can assume, that $(x^{n},y^{n})$ tends to some
$(\tilde{x},\tilde{y})\in Q$. Denote by $\chi^{n}$ the characteristic
function of the interval
\[
\{(x,y)\in Q:0\leq x<x^{n},0\leq y<y^{n}\}
\]
 and by $\tilde{\chi}$ the characteristic function of the interval
\[
\{(x,y)\in Q:0\leq x<\tilde{x},0\leq y<\tilde{y}\}.
\]
It is easy to notice that $\chi^{n}$ tends to $\tilde{\chi}$ on
$Q$ a.e. This implies the following inequalities
\begin{align*}
&\lim_{n\rightarrow\infty}|z^{n}(x^{n},y^{n})|  \leq\lim_{n\rightarrow\infty
}|z^{n}(x^{n},y^{n})-z^{n}(\tilde{x},\tilde{y})|+\lim_{n\rightarrow\infty
}|z^{n}(\tilde{x},\tilde{y})|\\
=&\lim_{n\rightarrow\infty}\left\vert \int_{0}^{1}\int_{0}^{1}\chi
^{n}(s,t)z_{xy}^{n}(s,t)dsdt-\int_{0}^{1}\int_{0}^{1}\tilde{\chi}%
(s,t)z_{xy}^{n}(s,t)dsdt\right\vert \\
+&\lim_{n\rightarrow\infty}\left\vert \int_{0}^{1}\int_{0}^{1}\tilde{\chi
}(s,t)z_{xy}^{n}(s,t)dsdt\right\vert .
\end{align*}
Since $z_{xy}^{n}$ tends to zero weakly in $L^{2}(Q,\mathbb{R}^{n})$
the last limit is equal zero. Therefore
\begin{align*}
&\lim_{n\rightarrow\infty}|z^{n}(x^{n},y^{n})| \leq\lim_{n\rightarrow\infty
}\int_{0}^{1}\int_{0}^{1}|\chi^{n}(s,t)-\tilde{\chi}(s,t)||z_{xy}%
^{n}(s,t)|dsdt\\
\leq&\lim_{n\rightarrow\infty}\left(  \int_{0}^{1}\int_{0}^{1}|\chi
^{n}(s,t)-\tilde{\chi}(s,t)|^{2}dsdt\right)  ^{\frac{1}{2}}\cdot\left(
\int_{0}^{1}\int_{0}^{1}|z_{xy}^{n}(s,t)|^{2}dsdt\right)  ^{\frac{1}{2}}\\
\leq C\cdot&\lim_{n\rightarrow\infty}\left(  \int_{0}^{1}\int_{0}^{1}%
|\chi^{n}(s,t)-\tilde{\chi}(s,t)|^{2}dsdt\right)  ^{\frac{1}{2}}=0,
\end{align*}
where $C>0$ is some constant such that $\|z_{xy}^{n}\|\leq C$. Consequently,\break
$\lim _{n\rightarrow\infty}|z^{n}(x^{n},y^{n})|=0$. This contradicts
our assumption (\ref{3.9}). Thus $z^{k}$ tends to $z^{0}$ uniformly
on $Q$. 

Next we prove  the assertion (b) of Lemma \ref{lem:3.4}. By assumptions \textbf{(C1)} and \textbf{(C2)} we infer that
\begin{equation}
\lim _{k\rightarrow\infty}f^{1}(x,y,z^{k}(x,y))=f^{1}(x,y,z^{0}(x,y))\label{3.10}
\end{equation}
for a.e. $(x,y)\in Q$,
\[
\lim _{k\rightarrow\infty}\int _{0}^{x}\int _{0}^{y}f^{2}(s,t,z^{k}(s,t))dsdt=\int _{0}^{x}\int _{0}^{y}f^{2}(s,t,z^{0}(s,t))dsdt
\]
for a.e. $(x,y)\in Q$ and that there exists a function $b^{1}\in L^{2}(Q,\mathbb{R}^{+})$
such that
\[
\left|f^{1}(x,y,z^{k}(x,y))\right|,\left|\int _{0}^{x}\int _{0}^{y}f^{2}(s,t,z^{k}(s,t))dsdt\right|\leq b^{1}(x,y)
\]
 for a.e. $(x,y)\in Q$ and $k=1,2,\ldots$. Integrating by parts
and taking into account assumption \textbf{(C1)} and Fubini's theorem we obtain
\begin{align}
\int_{0}^{x}\int_{0}^{y}A^{1}(s,t)z_{x}^{k}(s,t)dsdt  & =\int_{0}^{y}\left(
\int_{0}^{x}A^{1}(s,t)z_{x}^{k}(s,t)ds\right)  dt\label{3.11}\\
& =\int_{0}^{y}A^{1}(x,t)z^{k}(x,t)dt-\int_{0}^{x}\int_{0}^{y}A_{x}%
^{1}(s,t)z^{k}(s,t)dsdt\nonumber
\end{align}
for $k=0,1,...$. Since $z^{k}$ tends to $z^{0}$ uniformly on $Q$ provided
that $z^{k}$ converges to $z^{0}$ weakly in $AC_{0}^{2}(Q,\mathbb{R}^{n})$,
we get thanks to (\ref{3.11}) that
\begin{align}
&\lim_{k\rightarrow\infty}\int_{0}^{x}\int_{0}^{y}A^{1}(s,t)z_{x}^{k}(s,t)dsdt
=\label{3.12}\\
& =\int_{0}^{y}\lim_{k\rightarrow\infty}A^{1}(x,t)z^{k}(x,t)dt-\int_{0}%
^{x}\int_{0}^{y}\lim_{k\rightarrow\infty}A_{x}^{1}(s,t)z^{k}%
(s,t)dsdt\nonumber\\
& =\int_{0}^{y}A^{1}(x,t)z^{0}(x,t)dt-\int_{0}^{x}\int_{0}^{y}A_{x}%
^{1}(s,t)z^{0}(s,t)dsdt\nonumber\\
& =\int_{0}^{x}\int_{0}^{y}A^{1}(s,t)z_{x}^{0}(s,t)dsdt\nonumber
\end{align}
for $(x,y)\in Q$. Similarly, we can show that
\begin{equation}
\lim _{k\rightarrow\infty}\int _{0}^{x}\int _{0}^{y}A^{2}(s,t)z_{y}^{k}(s,t)dsdt=\int _{0}^{x}\int _{0}^{y}A^{2}(s,t)z_{y}^{0}(s,t)dsdt\label{3.13}
\end{equation}
for $(x,y)\in Q$. By (\ref{3.11}) it is easy to notice that
\begin{equation}
\left|\int _{0}^{x}\int _{0}^{y}A^{1}(s,t)z_{x}^{k}(s,t)dsdt\right|\leq C^{1}\label{3.14}
\end{equation}
for some constant $C^{1}>0$, all $(x,y)\in Q$ and $k=1,2,\ldots$.
Similar estimation holds for the integral $\int _{0}^{x}\int _{0}^{y}A^{2}(s,t)z_{y}^{k}(s,t)dsdt$.
From (\ref{3.10}), (\ref{3.12}), (\ref{3.13}) and (\ref{3.14}) it
follows that
\[
\lim _{k\rightarrow\infty}g^{k}(x,y)=g^{0}(x,y)
\]
for a.e. $(x,y)\in Q$ Moreover, there exists a function $b^0\in L^{2}(Q,\mathbb{R}^{+})$
such that $|g^{k}(x,y)|\leq b^0(x,y)$ for $k=1,2,\ldots$ and
a.e. $(x,y)\in Q$. This completes the proof.\qed\end{proof}

\section{Main result and example}

Let us consider a functional $\varphi:AC_{0}^{2}\rightarrow\mathbb{R}$
given by the formula
\begin{equation}
\varphi\left(z\right)=\frac{1}{2}\left\Vert F(z)-v\right\Vert _{L^{2}}^{2},\label{eq:4.1}
\end{equation}
where $F$ is the operator defined by (3.6) and $v$ is a fixed function
from the space $L^{2}\left(Q,\mathbb{R}^{n}\right)$. We begin by
proving some lemmas.
\begin{lemma}
\label{lem:4.1}If the functions $f^1,f^2,A^1,A^2$ satisfy  assumptions \textbf{(C1)}--\textbf{(C2)},
then the functional $\varphi$ given by (\ref{eq:4.1}) satisfies (PS)--condition.\end{lemma}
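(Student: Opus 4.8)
The plan is to establish the (PS)--condition in the usual three moves: a Palais--Smale sequence is bounded by coercivity; a weak subsequential limit is in fact a strong limit, because the integral part of $F$ behaves compactly (Lemma \ref{lem:3.4}); and that limit is a zero of $\varphi'$. Recall that $\varphi$ is differentiable with $\varphi'(z)h=\langle F(z)-v,\,F'(z)h\rangle_{L^2}$ and $F'(z)h=h_{xy}+N'(z)h$, where $N(z):=F(z)-z_{xy}$ is the integral part of $F$ (so that $N(z_n)-v$ is precisely the function $g^n$ of (\ref{3.8})). Concretely, take a sequence $\{z_n\}\subset AC_0^2(Q,\mathbb{R}^n)$ with $|\varphi(z_n)|\le c$ and $\varphi'(z_n)\to 0$ in $(AC_0^2)^{*}$. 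From $|\varphi(z_n)|\le c$ we get $\|F(z_n)-v\|_{L^2}\le\sqrt{2c}$, hence $\|F(z_n)\|_{L^2}$ is bounded, and Lemma \ref{lem:3.2} forces $\{z_n\}$ to be bounded in $AC_0^2$. Passing to a subsequence (not relabelled), $z_n\to z^0$ weakly in the Hilbert space $AC_0^2$, equivalently $(z_n)_{xy}\to z^0_{xy}$ weakly in $L^2$; by Lemma \ref{lem:3.4}(a), $z_n\to z^0$ uniformly on $Q$, and by Lemma \ref{lem:3.4}(b) together with the dominated convergence theorem, $N(z_n)\to N(z^0)$ strongly in $L^2$.

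The central step is to upgrade weak convergence of $\{z_n\}$ to strong convergence. Since $\varphi'(z_n)\to 0$ in $(AC_0^2)^{*}$ and $\{z_n-z^0\}$ is bounded, $\varphi'(z_n)(z_n-z^0)\to 0$; expanding,
\[
\varphi'(z_n)(z_n-z^0)=\langle F(z_n)-v,\;(z_n)_{xy}-z^0_{xy}\rangle_{L^2}+\langle F(z_n)-v,\;N'(z_n)(z_n-z^0)\rangle_{L^2}.
\]
As $F(z_n)-v=(z_n)_{xy}+(N(z_n)-v)$, the first summand equals $\|(z_n)_{xy}\|_{L^2}^2-\langle(z_n)_{xy},z^0_{xy}\rangle_{L^2}+\langle N(z_n)-v,\,(z_n)_{xy}-z^0_{xy}\rangle_{L^2}$; here the last inner product tends to $0$ (a strongly convergent sequence paired with a weakly convergent one), and $\langle(z_n)_{xy},z^0_{xy}\rangle_{L^2}\to\|z^0_{xy}\|_{L^2}^2$ by weak convergence. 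Granting that the second summand also tends to $0$, we obtain $\|(z_n)_{xy}\|_{L^2}^2\to\|z^0_{xy}\|_{L^2}^2$, and weak convergence plus norm convergence gives $(z_n)_{xy}\to z^0_{xy}$ in $L^2$, i.e. $z_n\to z^0$ in $AC_0^2$.

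The hard part — and the only genuinely nonroutine point — is to show $\|N'(z_n)(z_n-z^0)\|_{L^2}\to 0$; then Cauchy--Schwarz against the bounded $\|F(z_n)-v\|_{L^2}$ disposes of the second summand. Here
\[
N'(z_n)(z_n-z^0)(x,y)=f^1_z(x,y,z_n)(z_n-z^0)+\int_0^x\int_0^y\big(f^2_z(s,t,z_n)(z_n-z^0)+A^1(z_n-z^0)_x+A^2(z_n-z^0)_y\big)\,ds\,dt.
\]
For the two terms carrying $f^1_z,f^2_z$ I would bound $|f^i_z(\cdot,\cdot,z_n)|$ by the constant $M_\varrho$ of \textbf{(C3)}, with $\varrho$ a common bound for $\sup_Q|z_n|$, and use $\sup_Q|z_n-z^0|\to 0$; for the terms carrying $A^1,A^2$ I would integrate by parts in $x$ (resp.\ in $y$) exactly as in (\ref{3.11}), replacing $z_x,z_y$ by $z$, and then invoke $|A^1|,|A^2|,|A^1_x|,|A^2_y|\le B$ from \textbf{(C2)} together with the uniform convergence once more; each of the four pieces then tends to $0$ uniformly on $Q$, hence in $L^2$.

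Finally, with $\bar z:=z^0=\lim z_n$ lying in the closure of $\{z_n\}$, it remains to check $\varphi'(\bar z)=0$: for a fixed $h\in AC_0^2$ we have $F(z_n)-v\to F(\bar z)-v$ in $L^2$ and $F'(z_n)h\to F'(\bar z)h$ in $L^2$ (the $A^1,A^2$--terms are independent of $z$ and cancel, while $f^i_z(\cdot,\cdot,z_n)h\to f^i_z(\cdot,\cdot,\bar z)h$ in $L^2$ by continuity of $f^i_z$ in $z$, the uniform convergence of $z_n$, and dominated convergence with the \textbf{(C3)}--bounds), so $0=\lim\varphi'(z_n)h=\langle F(\bar z)-v,\,F'(\bar z)h\rangle_{L^2}=\varphi'(\bar z)h$; as $h$ is arbitrary, $\varphi'(\bar z)=0$, which is exactly the (PS)--condition.
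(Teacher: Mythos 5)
Your proof is correct and follows the same overall strategy as the paper's: coercivity (Lemma \ref{lem:3.2}) yields boundedness of the (PS)--sequence, Lemma \ref{lem:3.4} yields uniform convergence of $z_n$ and, via dominated convergence, strong $L^2$--convergence of the lower-order part $g^n=N(z_n)-v$, integration by parts as in (\ref{3.11}) handles the $A^1,A^2$ terms, and the quadratic term is then forced to converge. The genuine difference is in how strong convergence is extracted at the end: the paper pairs the difference $\varphi'(z^k)-\varphi'(z^0)$ with $z^k-z^0$, which isolates $\Vert z^k-z^0\Vert_{AC_0^2}^2$ directly plus five remainder terms $V^1,\dots,V^5$; you pair only $\varphi'(z_n)$ with $z_n-z^0$, deduce $\Vert (z_n)_{xy}\Vert_{L^2}\to\Vert z^0_{xy}\Vert_{L^2}$, and invoke the Radon--Riesz property of the Hilbert space. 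Your route is slightly leaner --- the terms the paper collects in $V^3$ and $V^5$ (those paired with $z^0_{xy}+g^0$) need no separate estimate, since weak convergence alone disposes of $\langle\varphi'(z^0),z_n-z^0\rangle$ --- and you additionally verify $\varphi'(z^0)=0$ at the limit, which the paper's formulation of the (PS)--condition actually demands and which the paper leaves implicit. One bookkeeping remark: your bound $|f^i_z(\cdot,\cdot,z_n)|\le M_\varrho$ invokes \textbf{(C3)}, which is not among the stated hypotheses of the lemma (\textbf{(C1)}--\textbf{(C2)} only); but the paper's own treatment of $V^2$ and $V^3$ --- and indeed the Fr\'echet differentiability of $\varphi$ --- has the same hidden dependence on \textbf{(C3)}, and \textbf{(C3)} is assumed wherever the lemma is applied (Theorem \ref{thm:4.1}), so this is a defect of the lemma's statement rather than of your argument.
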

\begin{proof}
Let $\{z^{k}\}\subset AC_{0}^{2}$ be an arbitrary (PS)--sequence for
the functional $\varphi$. By Lemma \ref{lem:3.2} $\varphi$ is coercive. It
implies that the sequence $\{z^{k}\}$ is weakly compact in $AC_{0}^{2}$.
Passing if necessary to a subsequence we can assume, that $z^{k}$
tends to some $z^{0}$ weakly in $AC_{0}^{2}$. We claim that $\{z^{k}\}$
is compact with respect to the norm topology of the space $AC_{0}^{2}$.
Thanks to assumptions \textbf{(C1)}--\textbf{(C2)} it is easy to check that the functional $\varphi$
is Fréchet differentiable and
\begin{align}
\left\langle \varphi^{\prime}\left(  z^{k}\right)  ,h\right\rangle  &
=\int_{0}^{1}\int_{0}^{1}\left\langle h_{xy}\left(  x,y\right)  +f_{z}%
^{1}\left(  x,y,z^{k}\left(  x,y\right)  \right)  h\left(  x,y\right)
\right.  \nonumber\\
& +\int_{0}^{x}\int_{0}^{y}\left(  f_{z}^{2}\left(  s,t,z^{k}\left(
s,t\right)  \right)  h\left(  s,t\right)  +A^{1}\left(  s,t\right)
h_{x}\left(  s,t\right)  \right.  \label{eq:4.2}\\
& +\left.  \left.  A^{2}\left(  s,t\right)  h_{y}\left(  s,t\right)  \right)
dsdt,z_{xy}^{k}\left(  x,y\right)  +g^{k}\left(  x,y\right)  \right\rangle
dxdy,\nonumber
\end{align}
where the sequence $\{g^{k}\}\subset L^{2}\left(Q,\mathbb{R}^{n}\right)$
is given by formula (\ref{3.8}). Let us put $h^k-z^k-z^0$, $k=1,2,...$. From (\ref{eq:4.2}) it follows that
\begin{align}
\left\langle \varphi^{\prime}\left(  z^{k}\right)  -\varphi^{\prime}\left(
z^{0}\right)  ,z^{k}-z^{0}\right\rangle  & =\left\langle z_{xy}^{k}-z_{xy}%
^{0},h_{xy}^{k}\right\rangle +\sum_{i=1}^{5}V^{i}(z^{k})\nonumber\\
& =\left\Vert z^{k}-z^{0}\right\Vert _{AC_{0}^{2}}^{2}+\sum_{i=1}^{5}%
V^{i}\left(  z^{k}\right)  ,\label{eq:4.3}%
\end{align}

where
\begin{align*}
V^{1}\left(  z^{k}\right)    & =\left\langle z_{xy}^{k}-z_{xy}^{0},g^{k}%
-g^{0}\right\rangle \\
& =\int_{0}^{1}\int_{0}^{1}\left\langle z_{xy}^{k}\left(  x,y\right)
-z_{xy}^{0}\left(  x,y\right)  ,g^{k}\left(  x,y\right)  -g^{0}\left(
x,y\right)  \right\rangle dxdy,
\end{align*}%
\begin{align*}
V^{2}\left(  z^{k}\right)&=\int_{0}^{1}\int_{0}^{1}\left\langle  f_{z}%
^{1}\left(  x,y,z^{k}\left(  x,y\right)  \right)  \left(  z^{k}\left(
x,y\right)  -z^{0}\left(  x,y\right)  \right)  \right.  \\
& +\int_{0}^{x}\int_{0}^{y}f_{z}^{2}\left(  s,t,z^{k}\left(  s,t\right)
\right)  \left(  z^{k}\left(  s,t\right)  -z^{0}\left(  s,t\right)  \right)
dsdt,\\
& \left.  z_{xy}^{k}\left(  x,y\right)  +g^{k}\left(  x,y\right)
\right\rangle dxdy,
\end{align*}%
\begin{align*}
V^{3}\left(  z^{k}\right)    & =-\int_{0}^{1}\int_{0}^{1}\left\langle
f_{z}^{1}\left(  x,y,z^{0}\left(  x,y\right)  \right)  \left(  z^{k}\left(
x,y\right)  -z^{0}\left(  x,y\right)  \right)  \right.  \\
& +\int_{0}^{x}\int_{0}^{y}f_{z}^{2}\left(  s,t,z^{0}\left(  s,t\right)
\right)  \left(  z^{k}\left(  s,t\right)  -z^{0}\left(  s,t\right)  \right)
dsdt,\\
& \left.  z_{xy}^{0}\left(  x,y\right)  -g^{0}\left(  x,y\right)
\right\rangle dxdy,
\end{align*}%
\begin{align*}
V^{4}\left(  z^{k}\right)    & =\int_{0}^{1}\int_{0}^{1}\left\langle \int
_{0}^{x}\int_{0}^{y}A^{1}\left(  s,t\right)  \left(  z_{x}^{k}\left(
s,t\right)  -z_{x}^{0}\left(  s,t\right)  \right)  dsdt,\right.  \\
& \left.  z_{xy}^{k}\left(  x,y\right)  +g^{k}\left(  x,y\right)
\right\rangle dxdy\\
& +\int_{0}^{1}\int_{0}^{1}\left\langle \int_{0}^{x}\int_{0}^{y}A^{2}\left(
s,t\right)  \left(  z_{y}^{k}\left(  s,t\right)  -z_{y}^{0}\left(  s,t\right)
\right)  dsdt,\right.  \\
& \left.  z_{xy}^{k}\left(  x,y\right)  +g^{k}\left(  x,y\right)
\right\rangle dxdy,
\end{align*}%
\begin{align*}
V^{5}\left(  z^{k}\right)    & =-\int_{0}^{1}\int_{0}^{1}\left\langle \int
_{0}^{x}\int_{0}^{y}A^{1}\left(  s,t\right)  \left(  z_{x}^{k}\left(
s,t\right)  -z_{x}^{0}\left(  s,t\right)  \right)  dsdt,\right.  \\
& \left.  z_{xy}^{0}\left(  x,y\right)  +g^{0}\left(  x,y\right)
\right\rangle dxdy\\
& -\int_{0}^{1}\int_{0}^{1}\left\langle \int_{0}^{x}\int_{0}^{y}A^{2}\left(
s,t\right)  \left(  z_{y}^{k}\left(  s,t\right)  -z_{y}^{0}\left(  s,t\right)
\right)  dsdt,\right.  \\
& \left.  z_{xy}^{0}\left(  x,y\right)  +g^{0}\left(  x,y\right)
\right\rangle dxdy.
\end{align*}

By the Cauchy-Schwarz inequality we have the following estimation
\begin{align*}
\left\vert V^{1}\left(  z^{k}\right)  \right\vert ^{2}  & \leq\int_{0}^{1}%
\int_{0}^{1}\left\vert z_{xy}^{k}\left(  x,y\right)  -z_{xy}^{0}\left(
x,y\right)  \right\vert ^{2}dxdy\\
& \cdot\int_{0}^{1}\int_{0}^{1}\left\vert g^{k}\left(  x,y\right)
-g^{0}\left(  x,y\right)  \right\vert ^{2}dxdy.
\end{align*}
Since $z_{xy}^{k}-z_{xy}^{0}$ converges weakly to zero in $L^{2}\left(Q,\mathbb{R}^{n}\right)$,
therefore there exists a constant $C>0$ such that
\[
\left|V^{1}\left(z^{k}\right)\right|^{2}\leq C\int_{0}^{1}\int_{0}^{1}\left|g^{k}\left(x,y\right)-g^{0}\left(x,y\right)\right|^{2}dxdy.
\]

By Lemma \ref{lem:3.2} and Lebesgue dominated convergence theorem it follows
that $V^{1}\left(z^{k}\right)\rightarrow0$ as $k\rightarrow\infty$.
We have proved that $z^{k}\left(x,y\right)$ tends to $z^{0}\left(x,y\right)$
uniformly on $Q$ (see Lemma \ref{lem:3.2}). Therefore, it is easy to notice
that $V^{2}\left(z^{k}\right)$ and $V^{3}\left(z^{k}\right)$ converge
to zero as $k\rightarrow\infty$.

Let us consider the functional $V^{4}$. By (\ref{3.11}) we have
\begin{align*}
V^{4}\left(  z^{k}\right)    & =\int_{0}^{1}\int_{0}^{1}\left\langle \int
_{0}^{y}A^{1}\left(  x,t\right)  \left(  z^{k}\left(  x,t\right)
-z^{0}\left(  x,t\right)  \right)  dt\right.  \\
& -\int_{0}^{x}\int_{0}^{y}A_{x}^{1}\left(  s,t\right)  \left(  z^{k}\left(
s,t\right)  -z^{0}\left(  s,t\right)  \right)  dsdt,\\
& \left.  z_{xy}^{k}\left(  x,y\right)  +g^{k}\left(  x,y\right)
\right\rangle dxdy\\
& +\int_{0}^{1}\int_{0}^{1}\left\langle \int_{0}^{x}A^{2}\left(  s,y\right)
\left(  z^{k}\left(  s,y\right)  -z^{0}\left(  s,y\right)  \right)  ds\right.
\\
& -\int_{0}^{x}\int_{0}^{y}A_{y}^{2}\left(  s,t\right)  \left(  z^{k}\left(
s,t\right)  -z^{0}\left(  s,t\right)  \right)  dsdt,\\
& \left.  z_{xy}^{k}\left(  x,y\right)  +g^{k}\left(  x,y\right)
\right\rangle dxdy.
\end{align*}
Using the Cauchy-Schwarz inequality and Lemma \ref{lem:3.2} it is easy to show that $V^{4}\left(z^{k}\right)\rightarrow0$
as $k\rightarrow\infty$.

Similar considerations can be applied to $V^{5}\left(z^{k}\right)$. Thus $\lim_{k\rightarrow\infty}\sum_{i=1}^{5}V^{i}\left(z^{k}\right)=0$.

Now, let us observe that
\[
\lim_{k\rightarrow\infty}\varphi^{\prime}\left(z^{k}\right)\left(z^{k}-z^{0}\right)=0
\]
because $\{z^{k}\}$ is the (PS)--sequence for the functional $\varphi$
and the sequence $\left\{ z^{k}-z^{0}\right\} $ is bounded. Moreover,
\[
\lim_{k\rightarrow\infty}\varphi^{\prime}\left(z^{0}\right)\left(z^{k}-z^{0}\right)=0
\]
since $z^{k}$ tends weakly to $z^{0}$ in $AC_{0}^{2}$. Combining
these equalities and (\ref{eq:4.3}) we conclude that $$\lim_{k\rightarrow\infty}\left\Vert z^{k}-z^{0}\right\Vert _{AC_{0}^{2}}^{2}=0.$$
This gives us the desired conclusion that the functional $\varphi$
given by (\ref{eq:4.1}) satisfies (PS)--condition.\qed
\end{proof}
Next, we prove the following
\begin{lemma}
\label{lem:4.2}If the functions $f^{1}$,$f^{2}$, \textup{$A^{1}$,
$A^{2}$ satisfy assumptions \textbf{(C1)}--\textbf{(C3)} then for any $v\in L^{2}\left(Q,\mathbb{R}^{n}\right)$
there exists a unique solution $h_{v}\in AC_{0}^{2}$ to the system
\begin{equation}
F'\left(z^{0}\right)h=v,\label{eq:4.4}
\end{equation}
where the operator $F:AC_{0}^{2}\rightarrow L^{2}\left(Q,\mathbb{R}^{n}\right)$
is given by (\ref{3.6}) and $z^{0}\in AC_{0}^{2}$ is an arbitrary function.}\end{lemma}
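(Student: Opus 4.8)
The plan is to turn the linear equation (\ref{eq:4.4}) into a fixed point problem for a suitable Volterra-type integral operator and to apply the Banach contraction principle in the space $AC_{0,m}^{2}(Q,\mathbb{R}^{n})$ for $m$ large enough, using the very same Bielecki-norm device that already appeared in the proof of Lemma~\ref{lem:3.2}. First I would read off from (\ref{eq:4.2}) the explicit form of the linearized operator,
\[
F'(z^{0})h(x,y)=h_{xy}(x,y)+f_{z}^{1}(x,y,z^{0}(x,y))h(x,y)+\int_{0}^{x}\int_{0}^{y}\Bigl(f_{z}^{2}(s,t,z^{0}(s,t))h(s,t)+A^{1}(s,t)h_{x}(s,t)+A^{2}(s,t)h_{y}(s,t)\Bigr)dsdt,
\]
and denote by $G\colon AC_{0}^{2}\to L^{2}(Q,\mathbb{R}^{n})$ the linear operator given by all the terms on the right except $h_{xy}$; it is well defined because, for $h\in AC_{0}^{2}$, the functions $h,h_{x},h_{y}$ lie in $L^{2}(Q,\mathbb{R}^{n})$ and the double integral is a continuous (hence bounded) function of $(x,y)$.

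Since $z^{0}\in AC_{0}^{2}(Q,\mathbb{R}^{n})$ is continuous on the compact set $Q$, its range is a bounded subset of $\mathbb{R}^{n}$; fix $\varrho>0$ with $|z^{0}(x,y)|\le\varrho$ on $Q$. This is exactly where assumption \textbf{(C3)} enters: it yields $|f_{z}^{1}(x,y,z^{0}(x,y))|,|f_{z}^{2}(x,y,z^{0}(x,y))|\le M_{\varrho}$ for a.e. $(x,y)\in Q$, while \textbf{(C2)} gives $|A^{1}(x,y)|,|A^{2}(x,y)|\le B$. A function $h\in AC_{0}^{2}$ solves (\ref{eq:4.4}) if and only if $h_{xy}=v-G(h)$, and, in view of (\ref{2.1zdublowany}), this is equivalent to $h=\Phi(h)$, where
\[
\Phi(h)(x,y)=\int_{0}^{x}\int_{0}^{y}\bigl(v(s,t)-G(h)(s,t)\bigr)dsdt .
\]
Because $v-G(h)\in L^{2}(Q,\mathbb{R}^{n})$, the map $\Phi$ sends $AC_{0}^{2}$ into itself with $(\Phi(h))_{xy}=v-G(h)$; and since $G$ is linear, $\|\Phi(h^{1})-\Phi(h^{2})\|_{AC_{0,m}^{2}}=\|G(h^{1}-h^{2})\|_{L_{m}^{2}}$ for all $h^{1},h^{2}\in AC_{0}^{2}$.

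The main step is then the estimate of $\|G(h)\|_{L_{m}^{2}}$. Using the bounds just recorded and the notation $w_{0},w_{1},w_{2}$ of Lemma~\ref{lem:3.1}, one gets pointwise $|G(h)(x,y)|\le M_{\varrho}|h(x,y)|+M_{\varrho}w_{0}(x,y)+B\,w_{1}(x,y)+B\,w_{2}(x,y)$, hence, by the triangle inequality in $L_{m}^{2}$ together with estimates (\ref{3.1})--(\ref{3.4}),
\[
\|G(h)\|_{L_{m}^{2}}\le M_{\varrho}\|h\|_{L_{m}^{2}}+M_{\varrho}\|w_{0}\|_{L_{m}^{2}}+B\|w_{1}\|_{L_{m}^{2}}+B\|w_{2}\|_{L_{m}^{2}}\le\frac{4(M_{\varrho}+B)}{m}\,\|h\|_{AC_{0,m}^{2}}.
\]
Choosing $m>4(M_{\varrho}+B)$ makes $\Phi$ a contraction on the Hilbert space $AC_{0,m}^{2}(Q,\mathbb{R}^{n})$, so by the Banach contraction principle it has exactly one fixed point $h_{v}$; since the norms $\|\cdot\|_{AC_{0}^{2}}$ and $\|\cdot\|_{AC_{0,m}^{2}}$ are equivalent, $h_{v}$ is the unique solution of (\ref{eq:4.4}) in $AC_{0}^{2}$, and it clearly depends neither on the chosen $m$ nor on anything but $v$ and $z^{0}$. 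I expect the only genuinely substantive point to be the appeal to \textbf{(C3)} in order to bound $f_{z}^{1},f_{z}^{2}$ along the fixed function $z^{0}$ (this is precisely why \textbf{(C3)}, and not merely \textbf{(C1)}--\textbf{(C2)}, is invoked here); once the coefficients of the linearized equation are uniformly bounded, the remainder is a routine repetition of the Bielecki-norm estimates of Lemma~\ref{lem:3.1}.
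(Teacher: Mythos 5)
Your proof is correct and follows essentially the same route as the paper: both reduce (\ref{eq:4.4}) to a Volterra-type fixed point problem and apply the Banach contraction principle in a Bielecki-weighted norm for $m$ large, with \textbf{(C3)} supplying the uniform bound $M_{\varrho}$ on $f_{z}^{1},f_{z}^{2}$ along the bounded range of $z^{0}$. The only (cosmetic) difference is that the paper substitutes $g=h_{xy}$ and contracts in $L_{m}^{2}$, while you contract the isometrically equivalent map directly in $AC_{0,m}^{2}$, reusing the estimates of Lemma~\ref{lem:3.1} instead of redoing the integration by parts.
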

\begin{proof}
Let us put
\[
h\left(x,y\right)=\int_{0}^{x}\int_{0}^{y}g\left(s,t\right)dsdt,
\]
where $g\in L^{2}\left(Q,\mathbb{R}^{n}\right)$.
Substituting the above into (\ref{eq:4.4}) we obtain
\[
Hg=v,
\]
where
\begin{align*}
Hg\left(  x,y\right)    & =g\left(  x,y\right)  +f_{z}^{1}\left(
x,y,z^{0}\left(  x,y\right)  \right)  \cdot\int_{0}^{x}\int_{0}^{y}g\left(
s,t\right)  dsdt\\
& +\int_{0}^{x}\int_{0}^{y}\left(  f_{z}^{2}\left(  s,t,z^{0}\left(
s,t\right)  \right)  \int_{0}^{s}\int_{0}^{t}g\left(  \sigma,\tau\right)
d\sigma d\tau\right.  \\
& +\left.  A^{1}\left(  s,t\right)  \int_{0}^{t}g\left(  s,\tau\right)
d\tau+A^{2}\left(  s,t\right)  \int_{0}^{s}g\left(  \sigma,t\right)
d\sigma\right)  dsdt.
\end{align*}

Let us denote by $\tilde{H}$ the operator defined by
\begin{equation}
\tilde{H}g=Hg-g-v.\label{eq:4.5}
\end{equation}
We will restrict our investigation of the operator $\tilde{H}$ to
the space $L_{m}^{2}\left(Q,\mathbb{R}^{n}\right)$. We prove that
for sufficiently large $m>0$ the mapping $\tilde{H}$ is contracting
with respect to the norm $\left\Vert \cdot\right\Vert _{L_{m}^{2}}$
defined by (\ref{2.4}).
Under assumptions \textbf{(C2)} and \textbf{(C3)}, there exists a constant $d>0$ such
that
\begin{align*}
& \left\Vert \tilde{H}\left(  g^{1}-g^{2}\right)  \right\Vert _{L_{m}^{2}}\\
& \leq d\left(  \int_{0}^{1}\int_{0}^{1}\left(  e^{-m\left(  x+y\right)  }%
\int_{0}^{x}\int_{0}^{y}\left\vert g^{1}\left(  s,t\right)  -g^{2}\left(
s,t\right)  \right\vert ^{2}dsdt\right)  dxdy\right)  ^{\frac{1}{2}}\\
& +d\left(  \int_{0}^{1}\int_{0}^{1}\left(  e^{-m\left(  x+y\right)  }\int
_{0}^{x}\int_{0}^{y}\left(  \int_{0}^{s}\int_{0}^{t}\left\vert \left(
g^{1}\left(  \sigma,\tau\right)  -g^{2}\left(  \sigma,\tau\right)  \right)
\right\vert ^{2}d\sigma d\tau\right)  dsdt\right)  dxdy\right)  ^{\frac{1}{2}%
}\\
& +d\left(  \int_{0}^{1}\int_{0}^{1}\left(  e^{-m\left(  x+y\right)  }\int
_{0}^{x}\int_{0}^{y}\left(  \int_{0}^{t}\left\vert \left(  g^{1}\left(
s,\tau\right)  -g^{2}\left(  s,\tau\right)  \right)  \right\vert ^{2}%
d\tau\right)  dsdt\right)  dxdy\right)  ^{\frac{1}{2}}\\
& +d\left(  \int_{0}^{1}\int_{0}^{1}\left(  e^{-m\left(  x+y\right)  }\int
_{0}^{x}\int_{0}^{y}\left(  \int_{0}^{s}\left\vert \left(  g^{1}\left(
\sigma,t\right)  -g^{2}\left(  \sigma,t\right)  \right)  \right\vert
^{2}d\sigma\right)  dsdt\right)  dxdy\right)  ^{\frac{1}{2}}\\
& \leq4d\left(  \int_{0}^{1}\int_{0}^{1}\left(  e^{-m\left(  x+y\right)  }%
\int_{0}^{x}\int_{0}^{y}\left\vert g^{1}\left(  s,t\right)  -g^{2}\left(
s,t\right)  \right\vert ^{2}dsdt\right)  dxdy\right)  ^{\frac{1}{2}}.
\end{align*}
Integrating by parts twice, in much the same way as in the proof of
inequality (\ref{3.5}), we obtain
\[
\left\Vert \tilde{H}\left(g^{1}-g^{2}\right)\right\Vert _{L_{m}^{2}}\leq\frac{4d}{m^{2}}\left\Vert g^{1}-g^{2}\right\Vert _{L_{m}^{2}}.
\]
Hence for sufficiently large $m$, i.e. $m>2\sqrt{d}$, the operator
$\tilde{H}$ is contracting and, consequently, has a unique fixed
point. It means that, there exists exactly one point $g^{0}\in L^{2}\left(Q,\mathbb{R}^{n}\right)$
such that $g^{0}=\tilde{H}g^{0}$. By (\ref{eq:4.5}) we get ${H}g^{0}=v$
and it follows easily that a function $h_{v}$ given by
\[
h_{v}\left(x,y\right)=\int_{0}^{x}\int_{0}^{y}g^{0}\left(s,t\right)dsdt
\]
is a solution of (\ref{eq:4.4}) for fixed $v\in L^{2}\left(Q,\mathbb{R}^{n}\right)$.\qed
\end{proof}
We are now in a position to show the main result of the work.
\begin{theorem}
\label{thm:4.1}If the functions $f^{1}$,$f^{2}$, $A^{1}$,
$A^{2}$ satisfy assumptions \textbf{(C1)}--\textbf{(C3)} then for any $v\in L^{2}\left(Q,\mathbb{R}^{n}\right)$
the integro-differential system (\ref{1.7})--(\ref{1.8}) has a unique solution $z_{v}\in AC_{0}^{2}$.
The solution $z_{v}$ continuously depends on $v$ with respect to
the norm topology in the spaces \textup{$L^{2}\left(Q,\mathbb{R}^{n}\right)$
and $AC_{0}^{2}$. Moreover, the operator
\[
L^{2}\left(Q,\mathbb{R}^{n}\right)\ni v\mapsto z_{v}\in AC_{0}^{2}
\]
is differentiable (in Fr\'{e}chet sense).}\end{theorem}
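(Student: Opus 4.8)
The proof is essentially an application of the global diffeomorphism theorem (Theorem \ref{thm:2.1}) to the operator $F:AC_{0}^{2}(Q,\mathbb{R}^{n})\rightarrow L^{2}(Q,\mathbb{R}^{n})$ defined by (\ref{3.6}), with $Z=AC_{0}^{2}$ and $V=L^{2}(Q,\mathbb{R}^{n})$. The plan is to verify that $F$ meets the hypotheses of Theorem \ref{thm:2.1} and then to translate its two conclusions into the statements about existence, uniqueness, continuous dependence and Fr\'echet differentiability of the solution operator $v\mapsto z_{v}$.

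First I would check that $F$ is of class $C^{1}$. Under \textbf{(C1)}--\textbf{(C3)} the Nemytskii-type terms $z\mapsto f^{1}(\cdot,\cdot,z(\cdot,\cdot))$ and $z\mapsto f^{2}(\cdot,\cdot,z(\cdot,\cdot))$ are continuously differentiable from $AC_{0}^{2}$ into $L^{2}$ (using the uniform-convergence embedding from Lemma \ref{lem:3.4}(a), the growth bound in \textbf{(C2)}, and the local boundedness of the derivatives in \textbf{(C3)}); the linear integral terms involving $A^{1},A^{2}$ are bounded linear, hence smooth; and $z\mapsto z_{xy}$ is an isometry onto $L^{2}$. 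The derivative $F'(z^{0})$ is exactly the operator appearing implicitly in (\ref{eq:4.2}) and explicitly in Lemma \ref{lem:4.2}. Then condition (a) of Theorem \ref{thm:2.1} — that $\varphi(z)=\tfrac12\|F(z)-v\|_{L^{2}}^{2}$ satisfies the (PS)--condition for every $v$ — is precisely the content of Lemma \ref{lem:4.1}, and condition (b) — that $F'(z^{0})h=v$ has a unique solution for every $v$ and every $z^{0}$ — is precisely Lemma \ref{lem:4.2}.

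With both hypotheses verified, Theorem \ref{thm:2.1} yields at once that for each $v\in L^{2}(Q,\mathbb{R}^{n})$ the equation $F(z)=v$ has exactly one solution $z_{v}\in AC_{0}^{2}$, which by the definition (\ref{3.6}) of $F$ is the same as saying $z_{v}$ is the unique solution in $AC_{0}^{2}$ of the integro-differential system (\ref{1.7})--(\ref{1.8}) (the homogeneous boundary conditions (\ref{1.8}) being built into the space $AC_{0}^{2}$). Conclusion (2) of Theorem \ref{thm:2.1} gives that $v\mapsto z_{v}$ is Fr\'echet differentiable, and in particular continuous, with respect to the norm topologies of $L^{2}(Q,\mathbb{R}^{n})$ and $AC_{0}^{2}$; this is the asserted stability and robustness. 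So the body of the proof is just the statement ``all hypotheses of Theorem \ref{thm:2.1} hold by Lemmas \ref{lem:3.2}, \ref{lem:4.1} and \ref{lem:4.2}, hence the conclusions follow.''

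The only step requiring genuine care — and hence the one I would write out — is the verification that $F$ is $C^{1}$ and the identification of $F'$, since everything else has been isolated into the preceding lemmas. The main subtlety there is continuity of $z\mapsto F'(z)$ in the operator norm $\mathcal{L}(AC_{0}^{2},L^{2})$: one reduces it, via (\ref{eq:4.2}), to showing that $z^{k}\to z^{0}$ in $AC_{0}^{2}$ forces $f_{z}^{i}(\cdot,\cdot,z^{k}(\cdot,\cdot))\to f_{z}^{i}(\cdot,\cdot,z^{0}(\cdot,\cdot))$ in a suitable sense; here one uses that strong convergence in $AC_{0}^{2}$ implies uniform convergence on $Q$ (a strengthening of Lemma \ref{lem:3.4}(a), or simply (\ref{2.1zdublowany}) together with Cauchy--Schwarz), the continuity of $z\mapsto f_{z}^{i}(x,y,z)$ from \textbf{(C1)}, and the uniform bound $M_{\varrho}$ from \textbf{(C3)} on a ball containing all the $z^{k}$, so that the Lebesgue dominated convergence theorem applies. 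Once this is in place, the proof is complete by invoking Theorem \ref{thm:2.1}. \qed
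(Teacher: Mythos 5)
Your proposal is correct and follows essentially the same route as the paper: the paper's proof of Theorem \ref{thm:4.1} consists precisely of invoking Theorem \ref{thm:2.1} with $Z=AC_{0}^{2}$ and $V=L^{2}(Q,\mathbb{R}^{n})$, hypotheses (a) and (b) being supplied by Lemmas \ref{lem:4.1} and \ref{lem:4.2}. Your additional attention to the $C^{1}$ property of $F$ (which the paper only asserts in passing inside the proof of Lemma \ref{lem:4.1}) is a reasonable supplement but does not change the argument.
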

\begin{proof}
If follows from Lemmas \ref{lem:4.1} and \ref{lem:4.2} that the
operator $F$ given by (\ref{3.6}) meets assumptions of Theorem \ref{thm:2.1}. Thus
system (\ref{1.7})--(\ref{1.8})  has a solution $z_{v}$ which satisfies the requirements
of our theorem.\qed
\end{proof}
We now give an example of integro-differential system of the form
(\ref{1.7})--(\ref{1.8}) which satisfies assumptions of Theorem \ref{thm:4.1}. For simplicity we put $n=1$.
\begin{example}
Consider 2D integro-differential system
\begin{align}
& z_{xy}\left(  x,y\right)  +w^{1}\left(  x,y\right)  \left(  \frac
{z^{3}\left(  x,y\right)  }{1+z^{2}\left(  x,y\right)  }+\psi^{1}%
(z(x,y)\right)  \nonumber\\
& +\int_{0}^{x}\int_{0}^{y}\left(  w^{2}\left(  s,t\right)  \frac{z\left(
s,t\right)  -1}{1+z^{2}\left(  x,y\right)  }+\psi^{2}(z(x,y))\right.
\nonumber\\
& \left.  +A^{1}\left(  s,t\right)  z_{x}\left(  s,t\right)  +A^{2}\left(
s,t\right)  z_{y}\left(  s,t\right)  dsdt\right)   =v\left(  x,y\right)  ,\label{eq:4.6}%
\end{align}
where $w^{1},w^{2},A^{1},A^{2}$ are some polynomials,
$v\in L^{2}\left(Q,\mathbb{R}\right)$ and $\psi^1,\psi^2$ are some $C^1-$class functions with unbounded derivatives. For example one can take $\psi^1(z)=\cos z^k$ and $\psi^2(z)=\sin z^l$, where $k,l>1$. This simple and theoretical example allows us to emphasize
the difference between our work and some other methods of nonlinear
analysis.

It is easy to see that system (\ref{eq:4.6}) satisfies assumptions
\textbf{(C1)}--\textbf{(C3)}. Hence by Theorem \ref{thm:4.1} for any $v\in L^{2}\left(Q,\mathbb{R}\right)$
there exists a solution $z_{v}\in AC_{0}^{2}$ to the system (\ref{eq:4.6})
with the following properties:
\begin{enumerate}
\item the solution $z_{v}$ is unique,
\item $z_{v}$ continuously depends on $v$ with respect to the norm topology
of the spaces $L^{2}\left(Q,\mathbb{R}^{n}\right)$ and $AC_{0}^{2}$,
i.e. system (\ref{eq:4.6}) is stable,
\item the operator $L^{2}\left(Q,\mathbb{R}\right)\ni v\mapsto z_{v}\in AC_{0}^{2}$
is differentiable in Fréchet sense, i.e. system (\ref{eq:4.6}) is
robust.
\end{enumerate}
Let us notice that the functions $f^{1}\left(x,y,z\right)=w^{1}\left(x,y\right)\left(\frac{z^{3}}{1+z^{2}}+\psi^1(z)\right)$ and
$f^{2}\left(x,y,z\right)=w^{2}\left(x,y\right)\frac{z-1}{1+z^{2}}\allowbreak+\psi^2(z)$
are not  Lipschitz functions ($\sin z^{l}$ and $\cos z^l$ with $k,l\geq 1$ have ''fast variation''
when $\left|z\right|\rightarrow\infty$) and consequently we cannot
apply the Banach contraction principle. In this case the Schauder
fixed point theory may be applicable. But even using sophisticated
fixed point theorems we get only the existence of a solution to system
(\ref{eq:4.6}) and can hardly say anything related to properties
(1)--(3).
\end{example}

\section{Concluding remarks}

In the paper two-dimensional integro-differential system was investigated.
The main result of this work is theorem \ref{thm:4.1} on the stability
and robustness of a solution to considered system (\ref{1.7})--(\ref{1.8}).
As far as we know 2D integro-differential systems have not been studied
before. One-dimensional integro-differential systems described by
ordinary differential operators were examined in many works (see monogrph
\cite{lakshmikantham_theory_1995} and references therein). It is
important to notice that integro-differential operators can be used
in mathematical modeling of systems with ``memory'', i.e. systems
where the state at each moment $t$ depends on its behavior on some
interval $[t_{0},t)$. In our opinion 2D integro-differential systems
have the potential to play a similar role.

\bibliographystyle{apalike}
\bibliography{baza}

\end{document}